\newtheorem{theorem}{Theorem}
\newtheorem{res}[theorem]{Proposition}
\newcommand{\C}{\mathcal{C}}
\newcommand{\Flow}{\varphi}
\newcommand{\HHH}{\mathcal{H}}
\renewcommand{\H}{\mathsf{H}}
\newcommand{\KK}{\mathbb{K}}
\renewcommand{\L}{\mathsf{L}}
\newcommand{\NN}{\mathbb{N}}
\newcommand{\RR}{\mathbb{R}}
\newcommand{\Tension}{\theta}
\newcommand{\ZZ}{\mathbb{Z}}
\newcommand{\comb}{\mathsf{comb}}
\newcommand{\conv}{\mathsf{conv}}
\newcommand{\defn}[1]{{\bf #1}}
\renewcommand{\dim}{\mathsf{dim}\:}
\newcommand{\head}{\mathsf{head}}
\newcommand{\ideal}[1]{\langle #1 \rangle}
\newcommand{\lto}{\prec} 
\newcommand{\mFlow}{\bar{\Flow}}
\newcommand{\mTension}{\bar{\Tension}}
\newcommand{\onenorm}[1]{|| #1 ||_1}
\newcommand{\pull}{\mathsf{pull}}
\newcommand{\rar}[0]{\rightarrow}
\newcommand{\relint}{\mathsf{relint}\:}
\newcommand{\supp}{\mathsf{supp}}
\newcommand{\tail}{\mathsf{tail}}
\renewcommand{\vert}{\mathsf{vert}}
\author{Felix Breuer\thanks{Freie Universit\"at Berlin, Arnimallee 3, 14195 Berlin, Germany, \textsf{felix.breuer@fu-berlin.de}. Supported by the DFG research training group ``Methods for Discrete Structures'' (GrK 1408).}
\and Aaron Dall \thanks{\textsf{adall1979@gmail.com}}}
\title{\textit{\textbf{Viewing counting polynomials as Hilbert functions via Ehrhart theory}}}
\begin{document}
\maketitle
\begin{abstract}
Steingr\'{\i}msson (2001) showed that the chromatic polynomial of a graph is the Hilbert function of a relative Stanley-Reisner ideal. We approach this result from the point of view of Ehrhart theory and give a sufficient criterion for when the Ehrhart polynomial of a given relative polytopal complex is a Hilbert function in Steingr\'{\i}msson's sense. We use this result to establish that the modular and integral flow and tension polynomials of a graph are Hilbert functions.


\end{abstract}


\section{Introduction}
\label{sec:in}

Steingr\'{\i}msson \cite{Steingrimsson01} showed that the proper $k+1$-colorings of a graph $G$ are in bijection with the  monomials of degree $k$ in a polynomial ring $\KK[x_1,\ldots,x_n]$ that lie inside a square-free monomial ideal $I_2$, but outside a square-free monomial ideal $I_1$. In other words, he showed that the chromatic polynomial $\chi_G$ of $G$ is the Hilbert function of a relative Stanley-Reisner ideal. To this end, he used a clever combinatorial construction to describe the ideals $I_1$ and $I_2$ explicitly. 

In this article we approach the problem from the point of view of Ehrhart theory, which allows us to arrive quickly at a sufficient criterion  for when the Ehrhart polynomial of a given relative polytopal complex is a Hilbert function in Steingr\'{\i}msson's sense: 
\begin{quotation}
\noindent
The Ehrhart function of a relative polytopal complex in which all faces are compressed is the Hilbert function of a relative Stanley-Reisner ideal.
\end{quotation}
See Theorem~\ref{thm:relative-ehrhart-hilbert}. We then apply this general result to establish that four other counting polynomials defined in terms of graphs are Hilbert functions: the modular flow and tension polynomials and their integral variants. Also, we are able to improve Steingr\'{\i}msson's result insofar as we are able to obtain the chromatic polynomial $\chi_G(k)$ itself as a Hilbert function, and not only the shifted chromatic polynomial $\chi_G(k+1)$. We conclude the paper by a giving another more algebraic proof of our geometric theorem, which allows us to generalize the result further.

These results have been developed in the authors' respective theses \cite{Dall08} and \cite{Breuer09}, to which we refer the interested reader for further details and additional material.

This paper is organized as follows. After some preliminary definitions in Section~\ref{sec:definitions} we review Steingr\'{\i}msson's theorem and related work in Section~\ref{sec:steingrimsson}. In Section~\ref{sec:hilbert-vs-ehrhart} our main result is derived. In Section~\ref{sec:hil:applications} we apply this result to show that all five counting polynomials are Hilbert functions. We present a generalization of our main result in Section~\ref{sec:hil:non-square-free-non-standard} along with a more algebraic proof. Finally, we give some constraints on the coefficients of the polynomials and discuss questions for further research in Section~\ref{sec:questions}.


\section{Preliminary Definitions}
\label{sec:definitions}

Before we begin, we gather some definitions. We recommend the textbooks \cite{BeckRobins07}, \cite{MillerSturmfels05}, \cite{Stanley96}, \cite{Eisenbud94}, \cite{Schrijver86} and \cite{West01} as references.

The \defn{Ehrhart function} $\L_A$ of any set $A\subset \RR^n$ is defined by $\L_A(k)=|\ZZ^n\cap k\cdot A|$ for $k\in\NN$. A \defn{lattice polytope} is a polytope in $\RR^n$, such that all vertices are integer points. It is a theorem of Ehrhart that the Ehrhart function $\L_P(k)$ of a lattice polytope is a polynomial in $k$. Two polytopes $P,Q$ are \defn{lattice isomorphic}, $P\approx Q$, if there exists an affine isomorphism $A$ such that $A|_{\ZZ^n}$ is a bijection onto $\ZZ^n$ and $AP=Q$. A \defn{$d$-simplex} is the convex hull of  $d+1$ affinely independent points. A $d$-simplex is \defn{unimodular} if it is lattice isomorphic to the convex hull of $d+1$ standard unit vectors. A lattice polytope is \defn{empty} if the only lattice points it contains are its vertices. A \defn{hyperplane arrangement} is a finite collection $\HHH$ of affine hyperplanes and $\bigcup\HHH$ denotes the union of all of these.

A \defn{polytopal complex} is a finite collection $\C$ of polytopes in some $\RR^n$ with the following two properties: If $P\in\C$ and $F$ is a face of $P$, then $F\in\C$; and if $P,Q\in\C$ then $F=P\cap Q\in \C$ and $F$ is common face of both $P$ and $Q$. The polytopes in $\C$ are also called \defn{faces} and $\bigcup\C$ denotes the union of all faces of $\C$. A (geometric) \defn{simplicial complex} is a polytopal complex in which all faces are simplices. An \defn{abstract simplicial complex} is a set $\Delta$ of subsets of a finite set $V$, such that $\Delta$ is closed under taking subsets. A geometric simplicial complex $\Delta$ gives rise to an abstract simplicial complex $\comb(\Delta)$ via $\comb(\Delta)=\{ \sigma | \text{$\sigma$ is the vertex set of some $F\in\C$}\}$. A polytopal complex $\C'$ that is a subset $\C'\subset\C$ of a polytopal complex $\C$ is called a \defn{subcomplex} of $\C$. Subcomplexes of abstract simplical complexes are defined similarly. Given a collection $S$ of polytopes in $\RR^n$ such that for any $P,Q\in S$ the set $P\cap Q$ is a face of both $P$ and $Q$, the polytopal complex $\C$ \defn{generated} by $S$, is $\C=\{F | \text{$F$ a face of $P\in S$}\}$. A \defn{subdivision} of a polytopal complex $\C$ is a polytopal complex $\C'$ such that $\bigcup\C=\bigcup\C'$ and every face of $\C'$ is contained in a face of $\C$. A \defn{triangulation} is a subdivision in which all faces are simplicies. A \defn{unimodular triangulation} is a triangulation in which all simplices are unimodular.

Let $\KK[x]=\KK[x_1,\ldots,x_n]$ denote the polynomial ring in $n$ variables over some field $\KK$ equipped with the standard grading by degree $\KK[x]=\bigoplus_{k\geq0}R_k$, where $R_k$ is the $\KK$-vector space generated by all monomials of degree $k$. A \defn{graded $\KK[x]$-module} is a module $M$ that can be written as a direct sum of abelian groups $M=\bigoplus_{-\infty}^\infty M_k$ such that $R_iM_j\subset M_{i+j}$ for all $i$ and $j$. The \defn{Hilbert function} $\H_M$ of $M$ is defined by $\H_M(k)=\dim_\KK M_k$. Let $I_1$ be a monomial ideal in $\KK[x]$ and consider the quotient ring $\KK[x]/I_1$ graded by degree. Then $\H_{\KK[x]/I_1}(k)=|\{x^a\in\KK[x] | x^a\not\in I_1, \deg(x^a)=k\}|$. Furthermore let $I_2\supset I_1$ be another monomial ideal. By abuse of notation we also denote by $I_2$ the ideal in $\KK[x]/I_1$ generated by the same set of monomials. We can write $I_2=\bigoplus_{k\geq 0} I_2^k$ where $I_2^k$ is the vector space generated by the monomials $x^a$ in $I_2$ with $\deg(x^a)=k$ that are non-zero in $\KK[x]/I_1$. Then $R_i I_2^k\subset I_2^{i+k}$, where the product is taken in $\KK[x]/I_1$, and $\H_{I_2}(k)=|\{x^a\in I_2\setminus I_1|\deg(x^a)=k\}|$. A \defn{term order} on $\KK[x]$ is a total order on the monomials $x^a\in\KK[a]$ such that $1\lto x^a$ for all $a\in\ZZ^n_{>0}$ and $x^a\lto x^b$ implies $x^{a+c}\lto x^{b+c}$ for all $a,b,c\in\ZZ^n_{\geq0}$.

We consider oriented graphs that may have loops and multiple edges. Note, however, that the values of the five counting polynomials do not depend on the orientation of the graph and are thus invariants of the underlying unoriented graph. Formally, a \defn{graph} is a tuple $(V,E,\head,\tail)$, where $V$ is a finite vertex set, $E$ is a finite edge set and $\head:E\rar V$ and $\tail:E\rar V$ are maps. Graph theoretic concepts such as adjacency, paths, connectivity, etc.\ are defined in the usual way. We note that a cycle in the underlying unoriented graph can be coded as a map $c:E\rar\{0,\pm1\}$ where $c_e=+1$ if the direction in which $e$ is traversed is consistent with the orientation of $e$ in $G$, $c_e=-1$ if the direction of traversal is opposite to the orientation in $G$ and $c_e=0$ if $e$ does not lie on the cycle. Here we view $c$ both as a map and as a vector as we shall do with all maps defined in this article.

Let $k\in\ZZ_{>0}$. A \defn{$k$-coloring} of $G$ is a map $x:V\rar \{0,\ldots,k-1\}$ and it is called \emph{proper} if $x_v\not=x_u$ whenever $u\sim v$. The \defn{chromatic polynomial $\chi_G$} is defined such that $\chi_G(k)$ is the number of proper $k$-colorings of $G$.

A \defn{$k$-tension} of $G$ is a map $t:E\rar\{-k+1,\ldots,k-1\}$ such that 
\begin{eqnarray}
\label{eqn:conservation-of-tension}
\sum_{e\in E} c_e t_e= 0 && \text{ for every cycle $c$ in $G$.}
\end{eqnarray} 
Similarly, a \defn{$\ZZ_k$-tension} of $G$ is a map $t:E\rar\ZZ_k$ such that (\ref{eqn:conservation-of-tension}) holds in $\ZZ_k$. A tension is \defn{nowhere zero} if $t(e)\not=0$ for all $e\in E$. Now we define functions $\Tension_G$ and $\mTension _G$ as follows: $\Tension_G(k)$ is the number of nowhere zero $k$-tensions of $G$ and $\mTension_G(k)$ is the number of nowhere zero $\ZZ_k$-tensions of $G$. Both $\Tension_G(k)$ and $\mTension_G(k)$ are polynomials in $k$, called the \defn{integral} and the \defn{modular tension polynomial}, respectively.

A \defn{$k$-flow} of $G$ is a map $f:E\rar\{-k+1,\ldots,k-1\}$ such that 
\begin{eqnarray}
\label{eqn:conservation-of-flow}
\sum_{\substack{e\in E\\ \head(e)=v}} f_e - \sum_{\substack{e\in E\\ \tail(e)=v}} f_e= 0 && \text{ for every vertex $v$ of $G$.}
\end{eqnarray}
 Similarly, a \defn{$\ZZ_k$-flow} of $G$ is a map $f:E\rar\ZZ_k$ such that (\ref{eqn:conservation-of-flow}) holds in $\ZZ_k$. The functions $\Flow_G$ and $\mFlow_G$ are defined as follows: $\Flow_G(k)$ is the number of nowhere zero $k$-flows of $G$ and $\mFlow_G(k)$ is the number of nowhere zero $\ZZ_k$-flows of $G$. Both $\Flow_G(k)$ and $\mFlow_G(k)$ are polynomials in $k$, called the \defn{integral} and the \defn{modular flow polynomial}, respectively. More about these polynomials can be found in \cite{BeckZaslavsky06a}, \cite{BeckZaslavsky06b}, \cite{Kochol02} and \cite{Breuer09}.

\section{Steingr\'{\i}msson's theorem and related work}
\label{sec:steingrimsson}

Steingr\'{\i}msson \cite{Steingrimsson01} showed that for any graph $G$ the chromatic polynomial $\chi_G(k+1)$ shifted by one is the Hilbert function of a module with a particular structure.

\begin{theorem}{(Steingr\'{\i}msson \cite[Theorem~9]{Steingrimsson01}) }
\label{thm:hil:steingrimsson}
For any graph $G$, there exists a number $n$, a square-free monomial ideal $I_1$ in the polynomial ring over $n$ variables $\KK[x]=\KK[x_1,\ldots,x_n]$ and a square-free monomial ideal $I_2$ in $\KK[X]/I_1$ such that
\[
\H_{I_2}(k) = \chi_G(k+1)
\]
for all $k\in \ZZ_{>0}$, where $\H_{I_2}$ denotes the Hilbert function of $I_2$ with respect to the standard grading and $\chi_G$ denotes the chromatic polynomial of $G$.
\end{theorem}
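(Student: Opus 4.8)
The plan is to realize the shifted chromatic polynomial $\chi_G(k+1)$ as the Ehrhart function of a relative polytopal complex all of whose faces are compressed, and then to invoke Theorem~\ref{thm:relative-ehrhart-hilbert}.

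First I would encode colorings geometrically: identify a map $x\colon V\rar\set{0,\ldots,k}$ with a lattice point of the dilated cube $k\cdot[0,1]^V\subset\RR^V$, so that the $(k+1)$-colorings of $G$ are exactly the lattice points of $k\cdot[0,1]^V$ and the proper ones are those that avoid the graphic hyperplane arrangement $\HHH$ consisting of the hyperplanes $x_u=x_v$ over all edges $\set{u,v}$ of $G$. To turn ``closed cube minus open arrangement'' into a genuine relative polytopal complex I would take $\C$ to be the standard (``staircase'') triangulation of $[0,1]^V$, whose maximal faces are the simplices $\set{x\in[0,1]^V : x_{\sigma(1)}\le\cdots\le x_{\sigma(n)}}$ as $\sigma$ runs over the $n!$ linear orderings of the $n=|V|$ vertices; equivalently, these are the closures of the chambers of the full braid arrangement $\set{x_i=x_j}$ ($i\ne j$) intersected with the cube. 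Since each hyperplane $x_u=x_v$ is a union of faces of $\C$, the set $\DDD\subseteq\C$ of faces $F$ with $\relint F\subseteq\bigcup\HHH$ is a subcomplex, and the relative complex $(\C,\DDD)$ has underlying space $\bigcup\C\setminus\bigcup\DDD=[0,1]^V\setminus\bigcup\HHH$.

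Next I would read off the Ehrhart function and apply the main theorem. Because $\bigcup\HHH$ is a union of linear hyperplanes, dilation by $k\in\ZZ_{>0}$ gives $k\cdot(\bigcup\C\setminus\bigcup\DDD)=[0,k]^V\setminus\bigcup\HHH$, so
\[
\L_{\bigcup\C\setminus\bigcup\DDD}(k)=\bigl|\sset{x\colon V\rar\set{0,\ldots,k}}{x_u\ne x_v\text{ whenever }u\sim v}\bigr|=\chi_G(k+1).
\]
Moreover every maximal face of $\C$ is the order polytope of a chain, hence lattice isomorphic to the unimodular simplex $\conv\set{0,e_1,e_1+e_2,\ldots,e_1+\cdots+e_n}$; faces of unimodular simplices are unimodular, and unimodular simplices are compressed. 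Thus $(\C,\DDD)$ is a relative polytopal complex in which all faces are compressed, and Theorem~\ref{thm:relative-ehrhart-hilbert} produces a number $n$, a square-free monomial ideal $I_1\subseteq\KK[x_1,\ldots,x_n]$ and a square-free monomial ideal $I_2\supseteq I_1$ in $\KK[x]/I_1$ with $\H_{I_2}(k)=\L_{\bigcup\C\setminus\bigcup\DDD}(k)=\chi_G(k+1)$ for all $k\in\ZZ_{>0}$.

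The step that looks hardest, computing the Ehrhart function, is in fact routine; the real content is the choice of $\C$. It must be unimodular so that Theorem~\ref{thm:relative-ehrhart-hilbert} applies, and simultaneously fine enough that each diagonal $x_u=x_v$ is resolved into a subcomplex, so that deleting $\DDD$ leaves precisely the proper colorings. The braid/staircase triangulation meets both requirements at once; beyond that the only external ingredient is the standard fact that unimodular simplices are compressed, and everything else is bookkeeping. (A variant using a half-open cube would even yield $\chi_G(k)$ itself, the small improvement over Steingr\'{\i}msson's result mentioned in the introduction.)
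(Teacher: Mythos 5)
Your proof is correct, and it follows the paper's overall strategy for the chromatic polynomial: realize the (shifted) counting function as the Ehrhart function of a relative polytopal complex with compressed faces inside the unit cube, then invoke Theorem~\ref{thm:relative-ehrhart-hilbert}. The difference is the choice of decomposition. The paper (in the subsection on the chromatic polynomial, which as remarked there yields $\chi_G(k+1)$ once the closed cube $[0,1]^V$ replaces the half-open one) takes $\C$ to be generated by the closures $P_\sigma$ of the regions of the \emph{graphic} arrangement inside the cube and $\C'$ the faces lying in the hyperplanes $H_e$; those $P_\sigma$ are not simplices in general, and the paper only asserts that they are compressed. You refine all the way to the staircase triangulation induced by the full braid arrangement, so that every face of $\C$ is a unimodular lattice simplex and compressedness is immediate from the paper's own definitions (a unimodular simplex is empty, so its pulling triangulation under any order is itself). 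Since both constructions end up with the lattice points of the cube as vertex set, nothing is lost by refining; what is gained is that the one nontrivial hypothesis of Theorem~\ref{thm:relative-ehrhart-hilbert} is verified without appealing to Ohsugi--Hibi or to the unproved claim that the $P_\sigma$ are compressed. Your supporting checks are sound: each $H_e\cap[0,1]^V$ is a union of faces of the staircase triangulation (the carrier of a point with $x_u=x_v$ inside an order simplex already satisfies $x_u=x_v$ identically), so $\DDD$ is a subcomplex with $\bigcup\DDD=[0,1]^V\cap\bigcup\HHH$, and since the $H_e$ are linear hyperplanes, dilation gives $\L_{\bigcup\C\setminus\bigcup\DDD}(k)=\bigl|\ZZ^V\cap\bigl([0,k]^V\setminus\bigcup\HHH\bigr)\bigr|=\chi_G(k+1)$.
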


In \cite{Steingrimsson01} Steingr\'{\i}msson went on to define the coloring complex of a graph to be the simplicial complex given by the square-free monomial ideal $I_2$. In the case of colorings the ideal $I_1$ has a simple description, so bounds on the $f$-vector of the coloring complex translate into bounds on the coefficients of the chromatic polynomial. The articles \cite{Jonsson05},\cite{Hultman07},\cite{HershSwartz08}, building on Steingr\'{\i}msson's work, have mainly dealt with showing various properties of the coloring complex. Steingr\'{\i}msson himself gave a combinatorial description of the coloring complex and determined its Euler characteristic to be the number of acyclic orientations of $G$. To some extent this was already known: Welker observed that the coloring complex of a graph $G=(V,E)$ is the same as a complex appearing in the article \cite{HerzogReinerWelker98} by Herzog, Reiner and Welker, where this complex is shown to be homotopy equivalent to a wedge of spheres of dimension $|V|-3$ and the number of spheres is the number of acyclic orientations of $G$ minus one. Jonsson \cite{Jonsson05} showed the coloring complex to be constructible and hence Cohen-Macaulay. This result was improved by Hultman \cite{Hultman07} who showed the coloring complex to be shellable and by Hersh and Swartz \cite{HershSwartz08} who showed that the coloring complex has a convex ear decomposition. These results translate into bounds on the coefficients of $\chi_G$.

In this article we concentrate on establishing the structural result that the four counting polynomials are Hilbert functions in Steingr\'{\i}msson's sense. We do not focus on the task of obtaining bounds on the coefficients, but we make some general observations and suggest directions for future research.

\section{Hilbert equals Ehrhart}
\label{sec:hilbert-vs-ehrhart}

In this section we relate Ehrhart functions of certain complexes to Hilbert functions of ideals defined in terms of these complexes. We begin with the well-known relation between simplicial complexes and the corresponding Stanley-Reisner ideals, move on to relative simplicial complexes and relative Stanley-Reisner ideals before we finally consider relative polytopal complexes.\footnote{We introduce the polytopal Stanley-Reisner ideals corresponding to polytopal complexes only later in Section~\ref{sec:hil:non-square-free-non-standard}.} As Ehrhart functions are defined in terms of geometric simplicial complexes while Stanley-Reisner ideals are defined in terms of abstract simplicial complexes, all the complexes we consider live in both worlds. A geometric simplicial complex $\Delta$ has an abstract simplicial complex $\comb(\Delta)$ associated with it, see Section~\ref{sec:definitions}.

Let $\Delta$ be an abstract simplicial complex on the ground set $V$. We identify the elements of the ground set of $\Delta$ with the variables in the polynomial ring $\KK[x_v:v\in V]=:\KK[x]$. Thus sets $S\subset V$ correspond to square-free monomials in $\KK[x]$. The \defn{Stanley-Reisner ideal} $I_\Delta$ of $\Delta$ is generated by the monomials corresponding to the minimal non-faces of $\Delta$, more precisely
\[
I_\Delta := \langle x^u\in K[x] | \supp(u)\not\in\Delta \rangle.
\]
Then, the \defn{Stanley-Reisner ring} of $\Delta$ is the quotient $\KK[\Delta]=\KK[x]/I_\Delta$. We equip the ring $\KK[x]$ with the standard grading, that is for any monomial $x^u\in\KK[x]$ we have $\deg(x^u)=\onenorm{u}=\sum_{i=1}^n u_i$. The fundamental result about Stanley-Reisner rings is this:

\begin{theorem}{\cite{Stanley96} }
\label{thm:hilbert-function-and-f-vector}
Let $\Delta$ be a $d$-dimensional (abstract) simplicial complex with $f_i$ faces of dimension $i$ for $0\leq i\leq d$. Then the Hilbert function $\H_{\KK[\Delta]}$ of the Stanley-Reisner ring $\KK[\Delta]$ satisfies
\begin{eqnarray}
\label{eqn:hilbert-function-and-f-vector}
\H_{\KK[\Delta]}(k) & = & \sum_{i=0}^{d}f_i {k-1 \choose i}
\end{eqnarray}
for $k\in\ZZ_{>0}$ and $\H_{\KK[\Delta]}(0)=1$.
\end{theorem}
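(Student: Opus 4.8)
The plan is to compute $\H_{\KK[\Delta]}(k) = \dim_\KK (\KK[x]/I_\Delta)_k$ directly by counting monomials. Recall that a $\KK$-basis for $(\KK[x]/I_\Delta)_k$ is given by the images of the monomials $x^u$ with $\onenorm{u} = k$ and $\supp(u) \in \Delta$; indeed, by definition of $I_\Delta$ a monomial lies in $I_\Delta$ if and only if its support contains some minimal non-face, which is equivalent to $\supp(u) \notin \Delta$ since $\Delta$ is closed under taking subsets. So the task reduces to counting monomials of degree $k$ whose support is a face of $\Delta$.

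First I would partition these monomials according to their support: for each face $\sigma \in \Delta$ with $|\sigma| = i+1$ (i.e.\ $\dim \sigma = i$), the number of monomials $x^u$ of degree $k$ with $\supp(u) = \sigma$ \emph{exactly} equals the number of ways to write $k$ as an ordered sum of $i+1$ \emph{positive} integers, which is $\binom{k-1}{i}$ by a standard stars-and-bars argument. Summing over all faces, and grouping faces by dimension so that there are $f_i$ faces of dimension $i$, yields
\[
\H_{\KK[\Delta]}(k) = \sum_{\sigma \in \Delta} \binom{k-1}{\dim\sigma} = \sum_{i=0}^{d} f_i \binom{k-1}{i}
\]
for $k \in \ZZ_{>0}$. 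The empty face $\sigma = \varnothing$ contributes the constant monomial $1$; with the convention $\binom{k-1}{-1} = 0$ for $k \geq 1$ this empty face is not counted in the sum above, so one must be slightly careful about whether the empty set is regarded as a face. For $k = 0$ the only monomial of degree $0$ is $1$, giving $\H_{\KK[\Delta]}(0) = 1$ as claimed.

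I do not expect any serious obstacle here; the result is classical and the proof is essentially the stars-and-bars count together with the observation identifying the standard monomial basis of the Stanley--Reisner ring. The only point requiring minor care is bookkeeping of the empty face and the degree-$0$ part, and making explicit that the monomials with support in $\Delta$ really do form a basis of $\KK[x]/I_\Delta$ — this last fact is the standard description of Stanley--Reisner rings and may simply be cited from \cite{Stanley96}.
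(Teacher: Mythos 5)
Your proof is correct and is essentially the standard argument; the paper itself does not prove this theorem but cites it from Stanley, and your stars-and-bars count of monomials partitioned by support is exactly the classical proof (and is the algebraic mirror of the computation the paper does carry out right afterwards, where lattice points of $k\sigma$ lying in the relative interior of a face of dimension $i$ are counted by $\binom{k-1}{i}$). Your handling of the empty face and of $k=0$ is the right bookkeeping.
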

We remark that the right-hand side of (\ref{eqn:hilbert-function-and-f-vector}) evaluated at zero gives $\sum_{i=0}^{d}f_i {-1 \choose i}=\chi(\Delta)$, the Euler characteristic of $\Delta$.

If we are given a geometric simplicial complex $\Delta$ we will generally use $\vert(\Delta)$ as the ground set of the abstract simplicial complex $\comb(\Delta)$ and identify the variables of $\KK[x]$ with the vertices of $\Delta$. In this case we use $I_\Delta$ to refer to $I_{\comb(\Delta)}$ and similarly for $\KK[\Delta]$. 

Now the Ehrhart functions of a unimodular $d$-dimensional lattice simplex $\sigma^d$ and its relative interior $\relint \sigma^d$ are, respectively,
\begin{eqnarray}
\label{eqn:ehrhart-of-closed-simplex}\label{eqn:ehrhart-of-open-simplex}
\L_{\sigma^d}(k) = {k+d \choose d}
& \text{ and } &
\L_{\relint \sigma^d}(k) = {k-1 \choose d}.
\end{eqnarray}
Taken together, (\ref{eqn:hilbert-function-and-f-vector}) and (\ref{eqn:ehrhart-of-open-simplex}) tell us that for any (geometric) simplicial complex $\Delta$ in which all simplices are unimodular, the Ehrhart function $\L_\Delta(k) = |\ZZ^d\cap k\bigcup\Delta|$ of $\Delta$ satisfies
\begin{eqnarray}
\L_\Delta(k) \;\; = & \sum_{\sigma\in\Delta} \L_{\relint \sigma}(k) \;\; = \;\; \sum_{i=0}^{d}f_i {k-1 \choose i} & = \;\; \H_{\KK[\Delta]}(k)
\end{eqnarray}
for all $k\in\ZZ_{>0}$. Simply put: the Ehrhart function of a unimodular geometric simplicial complex and the Hilbert function of the corresponding Stanley-Reisner ring coincide. This fact is well-known, see for example \cite{MillerSturmfels05}. Taking the above approach and calculating the Ehrhart functions of open simplices, however, allows us to do without M\"obius inversion.

For our purpose we need a more general concept than that of a Stanley-Reisner ring. For an abstract simplicial complex $\Delta$ the Hilbert function $\H_{\KK[\Delta]}(k)$ counts all those monomials $x^u$ of degree $k$ with $\supp(u)\in\Delta$. We are interested in a pair of simplicial complexes $\Delta'\subset\Delta$, the former being a subcomplex of the latter, and want to count those monomials $x^u$ such that $\supp(u)\not\in\Delta'$ but $\supp(u)\in\Delta$. To that end we follow Stanley \cite{Stanley96} in calling a pair of simplicial complexes $\Delta'\subset\Delta$ a \defn{relative simplicial complex}. We denote by $I_{\Delta/\Delta'}$ the ideal in $\KK[\Delta]$ generated by all monomials $x^u$ with $\supp(u)\not\in\Delta'$. We call this the \defn{relative Stanley-Reisner ideal}. Its Hilbert function $\H_{I_{\Delta/\Delta'}}(k)$ counts the number of non-zero monomials $x^u$ of degree $k$ in $I_{\Delta'}\setminus I_{\Delta}$ or, equivalently, the number of non-zero monomials $x^u$ in $\KK[x]$ with $\supp(u)\in\Delta\setminus\Delta'$. (Notice how the roles of $\Delta$ and $\Delta'$ swap, depending on whether we formulate the condition using ideals or using complexes). Now, as Stanley remarks, Theorem~\ref{thm:hilbert-function-and-f-vector} carries over to the relative case.

\begin{theorem}{\cite{Stanley96}}
\label{thm:relative-hilbert-function-and-f-vector}
Let $\Delta'\subset\Delta$ be a relative d-dimensional abstract simplicial complex and let $f_i$ denote the number of $i$-dimensional simplices in $\Delta\setminus\Delta'$. Then for all $k\in\ZZ_{>0}$
\begin{eqnarray}
\label{eqn:relative-hilbert-function-and-f-vector}
\H_{I_{\Delta/\Delta'}}(k) & = & \sum_{i=0}^{d}f_i {k-1 \choose i}.
\end{eqnarray}
\end{theorem}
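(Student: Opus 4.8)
The strategy is to reduce the relative statement to the absolute one (Theorem~\ref{thm:hilbert-function-and-f-vector}) by an inclusion–exclusion argument on the level of Hilbert functions. The key observation is that the relevant vector spaces fit into a short exact sequence of graded $\KK[x]$-modules
\[
0 \rar I_{\Delta/\Delta'} \rar \KK[\Delta] \rar \KK[\Delta'] \rar 0,
\]
where the first map is the inclusion of the ideal generated by the monomials $x^u$ with $\supp(u)\not\in\Delta'$ into $\KK[\Delta]=\KK[x]/I_\Delta$, and the second is the quotient map identifying $\KK[\Delta]/I_{\Delta/\Delta'}$ with $\KK[\Delta']=\KK[x]/I_{\Delta'}$. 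One should first verify this sequence is well-defined and exact: the point is that, modulo $I_\Delta$, a monomial $x^u$ either satisfies $\supp(u)\in\Delta'$ (and survives in $\KK[\Delta']$) or $\supp(u)\in\Delta\setminus\Delta'$ (and lies in $I_{\Delta/\Delta'}$), with the two cases disjoint; monomials with $\supp(u)\not\in\Delta$ are already zero in $\KK[\Delta]$.

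\smallskip
\noindent
Since all three modules are graded and the maps are degree-preserving, the sequence splits in each degree $k$ as vector spaces, so Hilbert functions are additive:
\[
\H_{I_{\Delta/\Delta'}}(k) \;=\; \H_{\KK[\Delta]}(k) - \H_{\KK[\Delta']}(k).
\]
Now apply Theorem~\ref{thm:hilbert-function-and-f-vector} to each term on the right. Writing $f_i(\Delta)$ and $f_i(\Delta')$ for the numbers of $i$-dimensional faces of $\Delta$ and $\Delta'$ respectively, and noting that since $\Delta'\subset\Delta$ is a subcomplex every $i$-face of $\Delta'$ is an $i$-face of $\Delta$, we get $f_i(\Delta)-f_i(\Delta') = f_i$, the number of $i$-dimensional simplices in $\Delta\setminus\Delta'$. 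Substituting and collecting the binomial coefficients term by term yields
\[
\H_{I_{\Delta/\Delta'}}(k) \;=\; \sum_{i=0}^{d}\bigl(f_i(\Delta)-f_i(\Delta')\bigr){k-1\choose i} \;=\; \sum_{i=0}^{d} f_i {k-1 \choose i}
\]
for all $k\in\ZZ_{>0}$, which is exactly (\ref{eqn:relative-hilbert-function-and-f-vector}). (One must be slightly careful that $\Delta$ and $\Delta'$ may have different dimensions; padding the shorter $f$-vector with zeros up to the common bound $d$ handles this, and the identity $f_i=f_i(\Delta)-f_i(\Delta')$ still holds in the padded range.)

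\smallskip
\noindent
\textbf{Main obstacle.} The routine binomial bookkeeping is harmless; the one step that requires genuine care is checking that the short exact sequence above is actually well-defined — that the submodule of $\KK[\Delta]$ generated by the monomials with $\supp(u)\not\in\Delta'$ has, as a $\KK$-basis in each degree, precisely the nonzero monomials $x^u$ with $\supp(u)\in\Delta\setminus\Delta'$, and not some larger set arising from $\KK[x]$-multiples whose support drops into $\Delta'$. This is where the combinatorics of simplicial complexes enters: multiplying $x^u$ (with $\supp(u)\not\in\Delta'$) by any monomial only enlarges the support, and a superset of a non-face of $\Delta'$ is again a non-face of $\Delta'$, so the generated submodule is monomial with the claimed basis. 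Once this is pinned down the rest is immediate, and indeed this is essentially the content of the remark (attributed to Stanley) that Theorem~\ref{thm:hilbert-function-and-f-vector} "carries over to the relative case."
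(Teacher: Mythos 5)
Your proof is correct. Note that the paper does not actually write out an argument for this theorem: it cites Stanley, establishes beforehand that $\H_{I_{\Delta/\Delta'}}(k)$ counts the nonzero monomials $x^u$ of degree $k$ with $\supp(u)\in\Delta\setminus\Delta'$, and then remarks that the absolute case ``carries over.'' The implied proof is a direct count: for each fixed $i$-dimensional face $\sigma\in\Delta\setminus\Delta'$ there are exactly ${k-1\choose i}$ monomials of degree $k$ with support equal to $\sigma$, and summing over the faces of $\Delta\setminus\Delta'$ gives the formula --- a computation that deliberately mirrors the Ehrhart sum $\sum_{\sigma}\L_{\relint\sigma}(k)$ over relatively open simplices that the paper performs immediately afterwards. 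You instead package the same combinatorial core (that $I_{\Delta/\Delta'}$ is a monomial ideal whose degree-$k$ basis is precisely the monomials with support in $\Delta\setminus\Delta'$, which you rightly flag as the one point needing care) into the short exact sequence $0\rar I_{\Delta/\Delta'}\rar\KK[\Delta]\rar\KK[\Delta']\rar 0$ and obtain the result by subtracting two instances of Theorem~\ref{thm:hilbert-function-and-f-vector}. Your route has the advantage of reusing the absolute theorem as a black box and making the additivity transparent (with the correct caveats about padding $f$-vectors and restricting to $k>0$, where the discrepancy $\H_{\KK[\Delta]}(0)=\H_{\KK[\Delta']}(0)=1$ is irrelevant); the direct count is what generalizes smoothly to the Ehrhart-theoretic and polytopal settings the paper needs later. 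Both are sound.
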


If $\Delta$ is a geometric simplicial complex and $\Delta'\subset \Delta$ a subcomplex, we also call the pair $\Delta'\subset \Delta$ a relative geometric simplicial complex and define its relative Stanley-Reisner ideal $I_{\Delta/\Delta'}$ to be $I_{\comb(\Delta)/\comb(\Delta')}$.

By the same argument as above, we conclude that for any relative $d$-di\-men\-sion\-al geometric simplicial complex $\Delta'\subset\Delta$, all faces of which are unimodular,
\begin{eqnarray}
\L_{\bigcup\Delta\setminus\bigcup\Delta'}(k) = \sum_{\sigma\in\Delta\setminus\Delta'} \L_{\relint(\sigma)}(k) = \sum_{i=0}^{d}f_i {k-1 \choose i} =  \H_{I_{\Delta/\Delta'}}(k)
\end{eqnarray}
for all $k\in\ZZ_{>0}$, i.e.\ the Ehrhart function of a relative simplicial complex with unimodular faces and the Hilbert function of the associated relative Stanley-Reisner ideal coincide. Moreover this function is a polynomial in $k$ as 
\[
{k-1 \choose i} = \frac{1}{i!}\prod_{i=1}^i (k-i)
\]
is a polynomial for every $i\in\ZZ_{\geq0}$ using the convention that $i!=\prod_{j=1}^ij$ and empty products are 1.

To be able to deal with the applications in Section~\ref{sec:hil:applications} we need to go one step further. The complexes we will be dealing with, are not going to be simplicial. Their faces will be polytopes. So we define a \defn{relative polytopal complex} to be a pair $\C'\subset\C$ of polytopal complexes, the former a subcomplex of the latter. Our goal is to realize the Ehrhart function $\L_{\bigcup\C\setminus\bigcup\C'}(k)$ as the Hilbert function of a relative Stanley-Reisner ideal.

By the above arguments, it would suffice to require that $\C$ has a unimodular triangulation. But for the sake of convenience we would like to impose a condition on $\C$ that can be checked one face at a time. Requiring that each face of $\C$ has a unimodular triangulation would not be sufficient. A unimodular triangulation $\Delta_F$ for each face $F\in\C$ does not guarantee that $\bigcup_{F\in\C}\Delta_F$ is a unimodular triangulation of $\C$: It may be that for faces $F_1,F_2\in\C$ that share a common face $F=F_1\cap F_2$ the unimodular triangulations $\Delta_{F_1}$ and $\Delta_{F_2}$ do not agree on $F$, i.e.\ \[\{F\cap f_1 \:|\: f_1\in\Delta_{F_1}\}\not=\{F\cap f_2 \:|\: f_2\in\Delta_{F_2}\}.\] Fortunately there is the notion of a compressed polytope: it suffices to require of each face $F\in\C$ individually that $F$ is compressed, to guarantee that $\C$ as a whole has a unimodular triangulation.

Let $P\subset\RR^d$ be a lattice polytope. Let $\prec$ be a total ordering of the lattice points in $P$. The \defn{pulling triangulation $\pull(P;\prec)$} of $P$ with respect to the total ordering $\prec$ is defined recursively as follows.  If $P$ is an empty simplex, then $\pull(P;\prec)$ is the complex generated by $P$. Otherwise $\pull(P;\prec)$ is the complex generated by the set of polytopes
\begin{eqnarray*}
\bigcup_F \left\{ \conv\{v,G\} : G\in \pull(F;\prec) \right\}
\end{eqnarray*}
where $v$ is the $\prec$-minimal lattice point in $P$ and the union runs over all faces $F$ of $P$ that do not contain $v$. See also Sturmfels \cite{Sturmfels96}. This construction yields a triangulation and the vertices of $\pull(P,\prec)$ are lattice points in $P$. Pulling triangulations need not be unimodular, in fact the simplices in a pulling triangulation do not even have to be empty! Polytopes whose pulling triangulation is always unimodular get a special name. A polytope $P$ is \defn{compressed} if for any total ordering $\prec$ on the vertex set the pulling triangulation $\pull(P,\prec)$ is unimodular. These definitions have the following well-known properties.

\begin{res}
\label{res:hil:compressed-polytopes}
\begin{enumerate}
\item Any $\dim(P)$-dimensional simplex in $\pull(P,\prec)$ contains the $\prec$-minimal lattice point $v$ in $P$ as a vertex.
\item $\pull(F,\prec)=\pull(P,\prec)\cap F$ for any total order $\prec$ on the lattice points in $P$ and any face $F$ of $P$.
\item All faces of a compressed polytopes are compressed.
\item Compressed polytopes are empty.
\end{enumerate}
\end{res}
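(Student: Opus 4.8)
The plan is to prove the four parts more or less in the order stated, since each builds on the previous one, and to do everything by the recursive structure of the pulling triangulation. Throughout let $v$ denote the $\prec$-minimal lattice point of $P$.

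\smallskip
\textbf{Part (1).} First I would argue directly from the recursive definition of $\pull(P,\prec)$. If $P$ is itself an empty simplex, then $\pull(P,\prec)$ is generated by $P$, whose unique maximal face is $P$ itself, and $v$ is a vertex of $P$; so the claim is immediate. Otherwise every maximal simplex arises as $\conv\{v,G\}$ for some face $F$ of $P$ not containing $v$ and some $G\in\pull(F,\prec)$. Such a simplex visibly has $v$ as a vertex, and one checks it has dimension $\dim(P)$ only when $\dim G=\dim F=\dim(P)-1$, i.e.\ when $F$ is a facet of $P$ missing $v$. So every full-dimensional simplex in $\pull(P,\prec)$ contains $v$. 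The one point to be careful about is that the polytopes $\conv\{v,G\}$ really are simplices of the asserted dimension and that the construction yields a genuine triangulation; this is the standard fact about pulling triangulations (cf.\ Sturmfels \cite{Sturmfels96}), which I would cite rather than reprove.

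\smallskip
\textbf{Part (2).} This is the key compatibility statement and I expect it to be the main obstacle, because it is what makes ``compressed'' a face-by-face condition. I would prove it by induction on $\dim(P)$, splitting on whether $v\in F$. If $v\notin F$, then $F$ is a face of $P$ not containing $v$, hence appears explicitly in the union defining $\pull(P,\prec)$; intersecting the cone $\conv\{v,G\}$ over $G\in\pull(F',\prec)$ (for faces $F'\supseteq$ or $\not\supseteq F$) with $F$ and using that $v$ lies outside $F$, only the terms coming from faces of $F$ survive, and by the inductive hypothesis applied inside $F$ these reassemble to $\pull(F,\prec)$. If $v\in F$, then the $\prec$-minimal lattice point of $F$ is again $v$, and the faces of $F$ not containing $v$ are exactly the sets $F\cap F'$ where $F'$ ranges over faces of $P$ not containing $v$; combining this with the inductive hypothesis $\pull(F\cap F',\prec)=\pull(F',\prec)\cap F$ lets me match the recursive definition of $\pull(F,\prec)$ term-by-term with $\pull(P,\prec)\cap F$. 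The delicate points are the bookkeeping of which faces of $P$ restrict to which faces of $F$, and making sure the recursion ``bottoms out'' correctly when $P$ or $F$ is an empty simplex.

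\smallskip
\textbf{Parts (3) and (4).} Part (3) is then essentially a formal consequence of (2): if $F$ is a face of a compressed polytope $P$ and $\prec$ is any total order on the lattice points of $F$, extend $\prec$ arbitrarily to a total order on the lattice points of $P$; by (2), $\pull(F,\prec)=\pull(P,\prec)\cap F$, and the restriction of a unimodular triangulation to a face is again unimodular, so $\pull(F,\prec)$ is unimodular. Since the total order on $F$ was arbitrary, $F$ is compressed. Part (4) follows from (3) together with (1): pick any total order $\prec$; by (1) every full-dimensional simplex $\sigma$ of the unimodular triangulation $\pull(P,\prec)$ contains $v$, and a unimodular simplex contains no lattice points besides its vertices, so every lattice point of $P$ is a vertex of some $\sigma\in\pull(P,\prec)$, hence a lattice point used in the triangulation. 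It remains to observe that the vertices of $\pull(P,\prec)$ that are \emph{not} vertices of $P$ cannot occur: if $w$ were such a vertex, vary $\prec$ so that $w$ is $\prec$-minimal, and then by (1) some full-dimensional simplex has $w$ as its apex — but a short convexity argument shows the apex of a pulling step must be an extreme point of $P$, a contradiction. Hence the only lattice points of $P$ are its vertices, i.e.\ $P$ is empty.
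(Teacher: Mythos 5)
Your parts (1)--(3) are sound: (1) follows directly from the recursive definition, the induction on $\dim(P)$ with the case split on whether $v\in F$ is exactly the right skeleton for (2), and (3) is the formal consequence of (2) that you describe. (The paper itself only cites \cite{Breuer09} for the proof, so there is no in-paper argument to compare against line by line.)

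Part (4), however, contains a genuine error. The ``short convexity argument'' that the apex of a pulling step must be an extreme point of $P$ is false: the apex is simply the $\prec$-minimal lattice point, and the recursive definition is perfectly happy to cone from a non-extreme point (if $v$ is not a vertex, one just cones over more faces). For $P=[0,2]\subset\RR$ with the order $1\prec 0\prec 2$, the apex is the non-vertex point $1$ and $\pull(P,\prec)$ has maximal cells $[0,1]$ and $[1,2]$ --- it is even unimodular. This example also shows that your overall strategy points in the wrong direction: making the non-vertex lattice point $w$ $\prec$-minimal is precisely the choice that can \emph{hide} the failure of emptiness. The correct move is the opposite one. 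Given a non-vertex lattice point $w$ of $P$, let $F$ be the smallest face with $w\in\relint F$ (so $\dim F\geq 1$); $F$ is compressed by (3). Choose $\prec$ so that the vertices of $F$ precede all other lattice points of $F$. An easy induction on the recursive definition then shows that every simplex of $\pull(F,\prec)$ has all of its vertices among the vertices of $F$: each apex chosen along the way is a vertex of the face being pulled (hence of $F$), and the base case is an empty simplex spanned by vertices of a face. Since the maximal simplices of $\pull(F,\prec)$ cover $F$, the point $w$ lies in one of them without being one of its vertices, so that simplex is not empty, hence not unimodular, contradicting the compressedness of $F$. (In the example, the order $0\prec 1\prec 2$ yields the single cell $[0,2]$, which is not unimodular.)
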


A proof of this proposition can be found in \cite{Breuer09}. For more information on pulling triangulations and compressed polytopes we refer to \cite{Sturmfels96}, \cite{OhsugiHibi01}, \cite{Sullivant04}, \cite{Sturmfels91} and \cite{DeLoeraRambauSantos09}.

Now, if $\C$ is a polytopal complex with integral vertices such that every face $P\in\C$ is compressed, then we can fix an arbitrary total order $\prec$ on $\bigcup\C\cap\ZZ^d$ and construct the pulling triangulations $\pull(P,\prec)$ of all faces $P\in\C$ with respect to that one global order $\prec$. By Proposition~ \ref{res:hil:compressed-polytopes} this means that the for any two $P_1,P_2\in\C$ that share a common face $F=P_1\cap P_2$ the triangulations induced on $F$ agree: $\pull(P_1,\prec)\cap F=\pull(F,\prec)=\pull(P_2,\prec)\cap F$. Thus $\pull(\C,\prec):=\bigcup_{F\in\C} \pull(F,\prec)$ is a unimodular triangulation of $\C$ with $\vert(\C)=\vert(\pull(\C,\prec))$. We abbreviate $\Delta:=\pull(\C,\prec)$.

If $\C'$ is any subcomplex of $\C$, we define $\Delta'$ to be the subcomplex of $\Delta$ consisting of those faces $F\in\Delta$ such that $F\subset\bigcup\C'$. So
\begin{eqnarray}
\L_{\bigcup\C\setminus\bigcup\C'}(k)=\L_{\bigcup\Delta\setminus\bigcup\Delta'}(k)=\H_{I_{\Delta/\Delta'}}(k)
\end{eqnarray}
for $k\in\ZZ_{>0}$ which means that we have realized the Ehrhart function of $\bigcup\C\setminus\bigcup\C'$ as the Hilbert function of the relative Stanley-Reisner ideal $I_{\Delta/\Delta'}$. Moreover, we have already seen that this function is a polynomial. We summarize these results in the following theorem.

\begin{theorem}
\label{thm:relative-ehrhart-hilbert}
Let $\C$ be a polytopal complex. If all faces of $\C$ are compressed lattice polytopes, then for any subcomplex $\C'\subset \C$ there exists a relative Stanley-Reisner ideal $I_{\Delta,\Delta'}$ such that for all $k\in\ZZ_{>0}$
\[
\L_{\bigcup\C\setminus\bigcup\C'}(k) = \H_{I_{\Delta/\Delta'}}(k)
\]
and this function is a polynomial.
\end{theorem}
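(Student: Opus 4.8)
The plan is to reduce the statement to the relative \emph{simplicial} case already treated above. Concretely, I would produce a single unimodular triangulation $\Delta$ of $\C$ together with a subcomplex $\Delta'$ such that $\bigcup\Delta\setminus\bigcup\Delta'=\bigcup\C\setminus\bigcup\C'$; once this is in hand, the chain of equalities established for relative geometric simplicial complexes with unimodular faces (using~(\ref{eqn:ehrhart-of-open-simplex}) and Theorem~\ref{thm:relative-hilbert-function-and-f-vector}) applies verbatim and delivers both the Hilbert-function description and the polynomiality.

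To build $\Delta$, first fix \emph{one} global total order $\prec$ on $\bigcup\C\cap\ZZ^d$ and, for every face $P\in\C$, form the pulling triangulation $\pull(P,\prec)$ using the restriction of $\prec$ to the lattice points of $P$; then set $\Delta:=\bigcup_{P\in\C}\pull(P,\prec)$. The crucial point --- and the step I expect to be the main obstacle --- is checking that these face-by-face triangulations actually glue into a triangulation of $\C$: if $P_1,P_2\in\C$ and $F:=P_1\cap P_2$, then $F$ is a face of both, so parts~(2) and~(3) of Proposition~\ref{res:hil:compressed-polytopes} give $\pull(P_1,\prec)\cap F=\pull(F,\prec)=\pull(P_2,\prec)\cap F$, i.e.\ the two triangulations restrict to the \emph{same} triangulation of the shared face $F$. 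This compatibility is exactly what forces $\Delta$ to be closed under intersections, hence a genuine simplicial complex subdividing $\C$; it is also where both hypotheses are indispensable (compressedness of every face of $\C$, and the use of a single order $\prec$). Unimodularity of $\Delta$ is then free: each $\pull(P,\prec)$ is unimodular by the very definition of a compressed polytope. (Since compressed polytopes are empty by part~(4), the pulling triangulations introduce no new vertices, so in fact $\vert(\Delta)=\vert(\C)$; this is not needed for the statement but is convenient in the applications.)

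Next I would set $\Delta':=\{F\in\Delta : F\subset\bigcup\C'\}$, which is visibly a subcomplex of $\Delta$. Because $\C'$ is a subcomplex, $\pull(P,\prec)\subset\Delta$ for every $P\in\C'$, and $\bigcup\pull(P,\prec)=P$, one gets $\bigcup\Delta'=\bigcup\C'$, and therefore $\bigcup\Delta\setminus\bigcup\Delta'=\bigcup\C\setminus\bigcup\C'$. Now $\Delta'\subset\Delta$ is a relative $d$-dimensional geometric simplicial complex with unimodular faces, so, writing $f_i$ for the number of $i$-dimensional simplices in $\Delta\setminus\Delta'$, equation~(\ref{eqn:ehrhart-of-open-simplex}) and Theorem~\ref{thm:relative-hilbert-function-and-f-vector} give
\[
\L_{\bigcup\C\setminus\bigcup\C'}(k)=\sum_{\sigma\in\Delta\setminus\Delta'}\L_{\relint\sigma}(k)=\sum_{i=0}^{d}f_i{k-1 \choose i}=\H_{I_{\Delta/\Delta'}}(k)
\]
for all $k\in\ZZ_{>0}$. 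Finally, each ${k-1 \choose i}$ is a polynomial in $k$, so the common value above is a polynomial, completing the argument. Essentially all the content sits in the gluing step of the previous paragraph; everything here is bookkeeping plus an appeal to the already-proved simplicial version.
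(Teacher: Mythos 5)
Your proposal is correct and follows essentially the same route as the paper: a single global order $\prec$, face-by-face pulling triangulations glued via Proposition~\ref{res:hil:compressed-polytopes}, the subcomplex $\Delta'=\{F\in\Delta : F\subset\bigcup\C'\}$, and then the already-established identity for relative unimodular simplicial complexes. No gaps.
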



\section{Counting Polynomials as Hilbert Functions}
\label{sec:hil:applications}

In this section we apply Theorem~\ref{thm:relative-ehrhart-hilbert} to obtain analogues of Steingr\'{\i}msson's Theorem \ref{thm:hil:steingrimsson} for all five counting polynomials. A useful tool in this context is going to be the following theorem by Ohsugi and Hibi which states that lattice polytopes that are slices of the unit cube are automatically compressed.

\begin{theorem}{(Ohsugi and Hibi \cite{OhsugiHibi01}) }
\label{res:hil:slices-of-cube-compressed}
Let $P$ be a lattice polytope in $\RR^n$. If $P$ is lattice isomorphic to the intersection of an affine subspace with the unit cube, i.e.\ $P\approx [0,1]^n\cap L$ for some affine subspace $L$, then $P$ is compressed.\footnote{Actually, Ohsugi and Hibi showed a more general result, but this will suffice for our purposes. Interestingly, Sullivant \cite{Sullivant04} noted that this condition is also necessary for a lattice polytope to be compressed.}
\end{theorem}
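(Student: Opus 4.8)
The plan is to reduce to a normalised situation and then induct on the dimension. Since being compressed, like the whole pulling construction, is invariant under lattice isomorphisms, I may assume outright that $P=[0,1]^n\cap L$. The hypothesis that $P$ is a lattice polytope then forces every vertex of $P$, being a lattice point of the unit cube, to be a $0/1$-vector. I would induct on $d=\dim P$, the case $d=0$ being trivial because a point is always a unimodular $0$-simplex.

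For the inductive step I first describe the facets of $P$. As $L$ is an affine subspace, the only inequalities that can be facet-defining for $P=\{x\in L:0\le x_j\le 1\text{ for all }j\}$ are the cube inequalities; hence every facet of $P$ has the form $F=P\cap\{x_i=\varepsilon\}$ for some coordinate $i$ and some $\varepsilon\in\{0,1\}$. Deleting the $i$-th coordinate (after pinning it to the integer $\varepsilon$) is a lattice isomorphism that identifies $F$ with $[0,1]^{n-1}\cap L'$ for a suitable affine subspace $L'\subseteq\RR^{n-1}$, so by the induction hypothesis every facet of $P$ is compressed. Consequently, for any total order $\prec$ on the lattice points of $P$, the triangulation $\pull(F,\prec)$ of each facet $F$ is unimodular, and in particular each of its $(d-1)$-dimensional simplices is unimodular.

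Now fix a total order $\prec$ and let $v$ be its minimal vertex. By the recursive definition of the pulling triangulation together with Proposition~\ref{res:hil:compressed-polytopes}, every $d$-dimensional simplex of $\pull(P,\prec)$ has the form $\conv\{v,G\}$, where $F$ is a facet of $P$ not containing $v$ and $G$ is a $(d-1)$-dimensional (hence unimodular) simplex of $\pull(F,\prec)$. The decisive claim — and the step I expect to be the crux — is that $v$ has lattice distance exactly $1$ from $\aff(F)$. To prove it I would observe that $\aff(F)=\aff(P)\cap\{x_i=\varepsilon\}$ and that the integer-valued affine functional $x\mapsto x_i$ on the affine lattice $\ZZ^n\cap\aff(P)$ is surjective onto $\ZZ$: it takes the value $\varepsilon$ at any vertex of $F$ and the value $1-\varepsilon$ at $v$ (a $0/1$-vertex not lying on $F$ must have $i$-th coordinate $1-\varepsilon$), so on the linear part of the lattice it realises the difference $1-2\varepsilon=\pm1$. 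Hence $x\mapsto x_i$ is, up to sign, the primitive integral functional defining $\aff(F)$, and the lattice distance from $v$ to $\aff(F)$ equals $|v_i-\varepsilon|=1$.

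Finally I would invoke the standard determinant identity for a pyramid: if $G$ is a unimodular $(d-1)$-simplex spanning the hyperplane $\aff(F)$ of $\aff(P)$ and $v$ is a lattice point of $\aff(P)$ at lattice distance $1$ from $\aff(F)$, then the normalised volume of $\conv\{v,G\}$ with respect to the lattice $\ZZ^n\cap\aff(P)$ equals $1\cdot 1=1$, so $\conv\{v,G\}$ is unimodular. Since this holds for every top-dimensional face of $\pull(P,\prec)$ and for every $\prec$, the polytope $P$ is compressed. The only points that require genuine care are bookkeeping of the several lattices against which volumes and distances are measured, and the surjectivity of $x\mapsto x_i$ used in the crux step; the remainder is routine.
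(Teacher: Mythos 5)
The paper offers no proof of this statement: it is imported as a black box from Ohsugi and Hibi, whose own argument goes through the characterization of compressed polytopes in terms of square-free reverse-lexicographic initial ideals of the associated toric ideal (the ``more general result'' alluded to in the footnote). Your direct geometric induction on $\dim P$ is therefore a genuinely different route, and I believe it is correct. The supporting facts all check out: every lattice point of $[0,1]^n\cap L$ is a $0/1$-point and hence a vertex of $P$ (so the $\prec$-minimal \emph{lattice point} in the pulling recursion is indeed your minimal \emph{vertex}, and $P$ is automatically empty); every facet is a coordinate slice $P\cap\{x_i=\varepsilon\}$, again a lattice slice of a cube one dimension down, so the induction hypothesis applies; and the surjectivity of $x\mapsto x_i$ on $\ZZ^n\cap\aff(P)$ correctly identifies it, up to sign, as the primitive functional cutting out $\aff(F)=\aff(P)\cap\{x_i=\varepsilon\}$, giving lattice distance $1$ and hence unimodularity of the pyramid $\conv\{v,G\}$. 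Two small points should be made explicit when writing this up. First, the recursion defining $\pull(P,\prec)$ bottoms out when $P$ is itself an empty simplex, in which case the triangulation is just the complex generated by $P$; your argument still covers this case, since then $P=\conv\{v,F\}$ for the facet $F$ opposite $v$, and $F$ is unimodular by induction (being a compressed empty simplex, its trivial pulling triangulation must be unimodular), so the same distance-$1$ computation applies --- but this is not literally an instance of your description of the top cells unless one observes that $\pull(F,\prec)$ is then generated by $F$ itself. Second, a unimodular triangulation requires \emph{all} cells to be unimodular, while you only verify the top-dimensional ones; add the standard remark that every lower-dimensional cell is a face of a top-dimensional cell and that faces of unimodular simplices are unimodular. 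What your approach buys is a short, self-contained, purely lattice-geometric proof of exactly the special case the paper needs; what the citation buys is the stronger statement (Ohsugi--Hibi, with Sullivant's converse) that the $0/1$-slice condition actually characterizes compressed polytopes.
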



\subsection*{Integral Flow and Tension Polynomials}

Let $S$ be the linear subspace of $\RR^E$ given by (\ref{eqn:conservation-of-flow}). The $k$-flows of $G$ are in bijection with the lattice points in $k\cdot (-1,1)^E \cap S$. Furthermore let $\HHH=\{ \{x| x_e=0\} \: | \: e\in E\}$ denote the arrangement of all coordinate hyperplanes. Then the nowhere zero $k$-flows of $G$ are in bijection with the lattice points in $k\cdot (-1,1)^E \cap S\setminus \bigcup \HHH$. The closures of the components of $(-1,1)^E \cap S\setminus \bigcup \HHH$ are of the form $(\prod_{e\in E} [a_e,a_e+1]) \cap S$ for some $a\in \{-1,0\}^E$. Let $\C$ be the polytopal complex generated by these and let $\C'$ be the subcomplex of all faces of $\C$ contained in the boundary of $[-1,1]^E$ or contained in one of the coordinate hyperplanes. Then  $\Flow_G(k)=\L_{\bigcup \C\setminus\bigcup \C'}(k)$. Because (\ref{eqn:conservation-of-flow}) gives rise to a totally unimodular matrix, the maximal faces of $\C$ are lattice polytopes.\footnote{We refer to \cite{Schrijver86} for the concept of a totally unimodular matrix and related results.} Moreover by Theorem~\ref{res:hil:slices-of-cube-compressed}, they are compressed. Thus Theorem~\ref{thm:relative-ehrhart-hilbert} can be applied to yield the following result.

\begin{theorem}
\label{res:hil:flow-as-hilbert}
For any graph $G$ there exists a relative Stanley-Reisner ideal $I_{\Delta/\Delta'}$ such that for all $k\in\ZZ_{>0}$
\[
\Flow_G(k) = \H_{I_{\Delta/\Delta'}}(k).
\]
\end{theorem}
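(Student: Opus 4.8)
The plan is to verify the three hypotheses of Theorem~\ref{thm:relative-ehrhart-hilbert} for the polytopal complex $\C$ and subcomplex $\C'$ described just before the statement, and then simply invoke that theorem. Recall the setup: $S\subset\RR^E$ is the flow subspace cut out by the conservation equations (\ref{eqn:conservation-of-flow}), and the maximal faces of $\C$ are the polytopes $(\prod_{e\in E}[a_e,a_e+1])\cap S$ for $a\in\{-1,0\}^E$, whose union is $[-1,1]^E\cap S$; the subcomplex $\C'$ collects those faces of $\C$ lying in $\partial[-1,1]^E$ or in some coordinate hyperplane. The first task is the identity $\Flow_G(k)=\L_{\bigcup\C\setminus\bigcup\C'}(k)$: a lattice point $f$ is a nowhere-zero $k$-flow iff $f\in S$, $-k< f_e< k$ for all $e$, and $f_e\neq0$ for all $e$; scaling by $1/k$, these are exactly the lattice points (scaled) in $k\cdot\big((-1,1)^E\cap S\setminus\bigcup\HHH\big)$, and since $\bigcup\C\setminus\bigcup\C'$ is precisely the open region $(-1,1)^E\cap S\setminus\bigcup\HHH$ (the interior relative to $S$, with all boundary-of-cube faces and all coordinate-hyperplane faces removed), the count matches. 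This is essentially bookkeeping about which faces land in $\C'$, but one must be a little careful that $\C'$ really removes the entire topological boundary of the open region inside $S$ and nothing more.

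The second and crucial task is to check that every face of $\C$ is a compressed lattice polytope. For this I would first argue that the \emph{maximal} faces are lattice polytopes: the face $(\prod_e[a_e,a_e+1])\cap S$ is a bounded polyhedron defined by the constraints $a_e\le x_e\le a_e+1$ together with the rows of the incidence-matrix-type system (\ref{eqn:conservation-of-flow}). Since the node–arc incidence matrix of a graph is totally unimodular (see \cite{Schrijver86}), the full constraint matrix stacking this with the identity is totally unimodular, and the right-hand side is integral ($a\in\{-1,0\}^E$), so every vertex of this polyhedron is integral — hence the maximal faces, and therefore all faces of $\C$, are lattice polytopes. Next, compressedness: each maximal face equals $[0,1]^E\cap L$ after the lattice automorphism of $\RR^E$ that translates coordinate $e$ by $-a_e$ (this is a bijection on $\ZZ^E$ since $a$ is integral), where $L$ is the translated copy of the affine subspace $S$; thus each maximal face is lattice isomorphic to a slice of the unit cube and is compressed by Theorem~\ref{res:hil:slices-of-cube-compressed}. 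By part~(3) of Proposition~\ref{res:hil:compressed-polytopes}, all faces of a compressed polytope are compressed, so every face of $\C$ is compressed.

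The third task is trivial once the above is in place: $\C'$ is a subcomplex of $\C$ by construction (it is defined as a collection of faces of $\C$ closed under taking faces — a face of a face lying in $\partial[-1,1]^E$ or a coordinate hyperplane again lies there), so Theorem~\ref{thm:relative-ehrhart-hilbert} applies verbatim and yields a relative Stanley–Reisner ideal $I_{\Delta/\Delta'}$ with $\L_{\bigcup\C\setminus\bigcup\C'}(k)=\H_{I_{\Delta/\Delta'}}(k)$ for all $k\in\ZZ_{>0}$. Combining with the identity from the first task gives $\Flow_G(k)=\H_{I_{\Delta/\Delta'}}(k)$, as desired.

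I expect the main obstacle to be the first task rather than the second: the total-unimodularity and compressedness arguments are short and standard, but correctly identifying $\C'$ — ensuring that $\bigcup\C\setminus\bigcup\C'$ really is the open region $(-1,1)^E\cap S\setminus\bigcup\HHH$ and that no interior lattice points are inadvertently excluded or included — requires a careful description of how the components of $(-1,1)^E\cap S\setminus\bigcup\HHH$ fit together inside the maximal faces $(\prod_e[a_e,a_e+1])\cap S$, and in particular that the closures of distinct components meet only along faces that are themselves forced into a coordinate hyperplane or into $\partial[-1,1]^E$. A clean way to handle this is to observe that $\bigcup\C = [-1,1]^E\cap S$, that $\bigcup\C'$ is exactly $\big(\partial[-1,1]^E\cup\bigcup\HHH\big)\cap S$ intersected with $\bigcup\C$, and hence $\bigcup\C\setminus\bigcup\C' = (-1,1)^E\cap S\setminus\bigcup\HHH$; the Ehrhart count of the latter at $k$ is by definition the number of nowhere-zero $k$-flows.
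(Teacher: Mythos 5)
Your proposal is correct and follows essentially the same route as the paper: the same complex $\C$ generated by the polytopes $(\prod_e[a_e,a_e+1])\cap S$, the same subcomplex $\C'$, lattice-point integrality via total unimodularity of the incidence matrix, compressedness via Theorem~\ref{res:hil:slices-of-cube-compressed} after translating each cell into the unit cube, and then an application of Theorem~\ref{thm:relative-ehrhart-hilbert}. Your added care about the lattice isomorphism and about verifying $\bigcup\C\setminus\bigcup\C'=(-1,1)^E\cap S\setminus\bigcup\HHH$ only fills in details the paper leaves implicit.
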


The above geometric construction can be found in \cite{BeckZaslavsky06b}. A similar construction given in \cite{Dall08} can be used to show an analogue of the above theorem for the integral tension polynomial.

\begin{theorem}
\label{res:hil:tension-as-hilbert}
For any graph $G$ there exists a relative Stanley-Reisner ideal $I_{\Delta/\Delta'}$ such that for all $k\in\ZZ_{>0}$
\[
\Tension_G(k) = \H_{I_{\Delta/\Delta'}}(k).
\]
\end{theorem}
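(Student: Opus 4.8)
The plan is to mirror, almost verbatim, the geometric construction used above for $\Flow_G$, replacing the flow space by the \emph{tension space}. Let $T\subseteq\RR^E$ be the linear subspace cut out by the conservation-of-tension equations~(\ref{eqn:conservation-of-tension}); since the cycle space of $G$ is finite-dimensional, only finitely many of these equations are needed, and we may write $T=\ker N$ where the rows of $N$ run over a basis of the cycle space of $G$ (for instance the fundamental cycles with respect to a fixed spanning forest). As in the flow case, a $k$-tension of $G$ is precisely a lattice point of $k\cdot(-1,1)^E\cap T$, and, with $\HHH=\{\{x\mid x_e=0\}\mid e\in E\}$ the arrangement of coordinate hyperplanes, a nowhere-zero $k$-tension is a lattice point of $k\cdot\big((-1,1)^E\cap T\setminus\bigcup\HHH\big)$. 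The closures of the connected components of $(-1,1)^E\cap T\setminus\bigcup\HHH$ are the polytopes $(\prod_{e\in E}[a_e,a_e+1])\cap T$ for $a\in\{-1,0\}^E$. I let $\C$ be the polytopal complex generated by these pieces and $\C'$ the subcomplex consisting of all faces of $\C$ that lie in the boundary of $[-1,1]^E$ or in one of the coordinate hyperplanes. By construction $\Tension_G(k)=\L_{\bigcup\C\setminus\bigcup\C'}(k)$ for all $k\in\ZZ_{>0}$.

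Next I would verify the hypotheses of Theorem~\ref{thm:relative-ehrhart-hilbert}, i.e.\ that every face of $\C$ is a compressed lattice polytope. Each maximal face $(\prod_{e\in E}[a_e,a_e+1])\cap T$ becomes, after the integral translation $t\mapsto t-a$, a set of the form $[0,1]^E\cap L$ with $L=T-a$ an affine subspace; hence Theorem~\ref{res:hil:slices-of-cube-compressed} shows it is compressed as soon as we know it is a lattice polytope. For the lattice-polytope property the point is that~(\ref{eqn:conservation-of-tension}) gives rise to a totally unimodular matrix: the fundamental cycle matrix $N$ of a graph is a network matrix and therefore totally unimodular, so the vertices of $\{t:0\le t\le 1,\ Nt=-Na\}$ are integral, and thus the maximal faces of $\C$ are lattice polytopes. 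Finally, faces of a lattice polytope are lattice polytopes and, by Proposition~\ref{res:hil:compressed-polytopes}(3), faces of a compressed polytope are compressed; hence every face of $\C$ is a compressed lattice polytope. Theorem~\ref{thm:relative-ehrhart-hilbert} then yields a relative Stanley-Reisner ideal $I_{\Delta/\Delta'}$ with $\Tension_G(k)=\H_{I_{\Delta/\Delta'}}(k)$ for all $k\in\ZZ_{>0}$.

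The only step that is not pure bookkeeping is the total unimodularity of the matrix encoding~(\ref{eqn:conservation-of-tension}); the cleanest route is precisely to collapse the ``for every cycle'' quantifier into the single finite fundamental cycle matrix of a spanning forest, after which regularity of the graphic matroid (see \cite{Schrijver86}) does the work. The remaining items are routine: that the pieces $(\prod_{e\in E}[a_e,a_e+1])\cap T$ pairwise meet in common faces (being the closures of chambers of the induced arrangement inside the polytope $[-1,1]^E\cap T$) so that they genuinely generate a polytopal complex; that $\C'$ is indeed a subcomplex; and that $\bigcup\C\setminus\bigcup\C'$ captures exactly the interior cells avoiding the coordinate hyperplanes, so that its Ehrhart function counts nowhere-zero integral tensions. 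I would spell these out in full, but expect no difficulty beyond the total unimodularity input.
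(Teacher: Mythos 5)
Your construction is correct and is exactly the argument the paper intends: it only says that ``a similar construction given in \cite{Dall08}'' works, and what you wrote is that construction --- the chamber decomposition of $[-1,1]^E\cap T$ by the coordinate hyperplanes, total unimodularity of the fundamental cycle matrix to make the chambers lattice polytopes, Theorem~\ref{res:hil:slices-of-cube-compressed} for compressedness, and then Theorem~\ref{thm:relative-ehrhart-hilbert}. No gaps worth flagging beyond the minor caveat (present in the paper's flow case too) that only the full-dimensional pieces $(\prod_e[a_e,a_e+1])\cap T$ are closures of chambers, the lower-dimensional ones being harmlessly absorbed into $\C'$.
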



\subsection*{Modular Flow and Tension Polynomials}

Let $v\in V$ and define the vector $a^v\in \{0,\pm1\}^E$ by $a^v_e=+1$ if $\head(e)=v\not=\tail(e)$, $a^v_e=-1$ if $\head(e)\not=v=\tail(e)$, and $a^v_e=0$ otherwise. Let $A$ denote the matrix with the vectors $a^v$ for $v\in V$ as rows. Now, we identify the integers $\{0,\ldots,k-1\}$ with their respective cosets in $\ZZ_k$ so that a function $f:E\rar\ZZ_k$ can be viewed as an integer vector $f\in [0,k)^E$. Using this identification all equations (\ref{eqn:conservation-of-flow}) hold for a given $f$ if and only if $Af=kb$ for some $b\in\ZZ^V$. Thus the set of nowhere zero $\ZZ_k$-flows on $G$ can be identified with the set of lattice points in $k\cdot( (0,1)^E\cap \bigcup_b H_b )$ where $H_b=\{f | Af=b\}$ and $b$ ranges over all integer vectors such that $(0,1)^E\cap H_b\not=\emptyset$. Let $\C$ be the complex generated by the respective closed polytopes $[0,1]^E\cap H_b$. Let $\C'$ be the subcomplex consisting of all those faces that are contained in the boundary of the cube $[0,1]^E$. Then $\mFlow_G(k)=\L_{\bigcup \C\setminus\bigcup \C'}(k)$. The faces of $\C$ are lattice polytopes because $A$ is totally unimodular and by Theorem~\ref{res:hil:slices-of-cube-compressed}, they are compressed. Thus Theorem~\ref{thm:relative-ehrhart-hilbert} can be applied to yield the following result.

\begin{theorem}
\label{res:hil:mflow-as-hilbert}
For any graph $G$ there exists a relative Stanley-Reisner ideal $I_{\Delta/\Delta'}$ such that for all $k\in\ZZ_{>0}$
\[
\mFlow_G(k) = \H_{I_{\Delta/\Delta'}}(k).
\]
\vspace{-3ex}
\end{theorem}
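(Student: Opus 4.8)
The plan is to apply Theorem~\ref{thm:relative-ehrhart-hilbert} to the relative polytopal complex $\C'\subset\C$ constructed immediately before the statement, so the work consists of justifying the three assertions made there: that $\mFlow_G(k)=\L_{\bigcup\C\setminus\bigcup\C'}(k)$ for $k\in\ZZ_{>0}$, that $\C$ really is a polytopal complex (with $\C'$ a subcomplex) all of whose faces are lattice polytopes, and that every face of $\C$ is compressed. Granting these, Theorem~\ref{thm:relative-ehrhart-hilbert} yields a relative Stanley-Reisner ideal $I_{\Delta/\Delta'}$ with $\L_{\bigcup\C\setminus\bigcup\C'}(k)=\H_{I_{\Delta/\Delta'}}(k)$ for all $k\in\ZZ_{>0}$, and chaining the two identities proves the theorem.

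First I would pin down the Ehrhart identity. Identifying $\ZZ_k$ with $\{0,\dots,k-1\}$ lets us view a map $f\colon E\to\ZZ_k$ as an integer vector in $[0,k)^E$, and the conservation law (\ref{eqn:conservation-of-flow}) holds in $\ZZ_k$ exactly when $Af\equiv 0 \pmod k$, i.e.\ $Af=kb$ for some $b\in\ZZ^V$. Such a map is nowhere zero precisely when every coordinate lies in $\{1,\dots,k-1\}$, hence in the open interval $(0,k)$. Thus nowhere-zero $\ZZ_k$-flows correspond bijectively to the integer points of $\bigcup_b\bigl(k\,(0,1)^E\cap\{f:Af=kb\}\bigr)=k\cdot\bigcup_b\bigl((0,1)^E\cap H_b\bigr)$, the union running over those $b\in\ZZ^V$ with $(0,1)^E\cap H_b\neq\emptyset$. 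It then remains to check that $\bigcup\C\setminus\bigcup\C'$ equals $\bigcup_b\bigl((0,1)^E\cap H_b\bigr)$: since each valid $b$ has $(0,1)^E\cap H_b\neq\emptyset$, the maximal face $[0,1]^E\cap H_b$ is not contained in $\partial[0,1]^E$, so a point of $[0,1]^E\cap H_b$ lies in $\bigcup\C'$ if and only if it lies on the boundary of the cube, i.e.\ $x_e\in\{0,1\}$ for some $e$; deleting $\bigcup\C'$ therefore deletes exactly the cube boundary, leaving the open slices. This gives $\mFlow_G(k)=\L_{\bigcup\C\setminus\bigcup\C'}(k)$.

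Next I would verify the structural hypotheses. For distinct $b$ the affine subspaces $H_b=\{f:Af=b\}$ are parallel and disjoint, so the polytopes $[0,1]^E\cap H_b$ are pairwise disjoint; hence $\C$ is the disjoint union of the polytopal complexes each generated by one such polytope and is itself a polytopal complex, while $\C'$, being the collection of faces contained in $\partial[0,1]^E$, is closed under passing to subfaces and so a subcomplex. Each face of $\C$ has the form $F\cap H_b$ for a face $F$ of $[0,1]^E$, hence is a slice of a cube by an affine subspace. Since $A$ is the signed vertex-edge incidence matrix of the directed graph $G$ it is totally unimodular, so appending the rows $\pm I$ keeps it totally unimodular and $\{x:0\le x\le 1,\ Ax=b\}$ is an integral polytope for every $b\in\ZZ^V$; thus all faces of $\C$ are lattice polytopes, and by Theorem~\ref{res:hil:slices-of-cube-compressed} each is compressed (for the maximal faces directly, for all faces via Proposition~\ref{res:hil:compressed-polytopes}(3)). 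All hypotheses of Theorem~\ref{thm:relative-ehrhart-hilbert} are then in place and the conclusion follows as above.

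I expect the only genuinely delicate point to be the Ehrhart identity: getting the bookkeeping right so that ``nowhere-zero $\ZZ_k$-flow'' is matched — after clearing the denominator $k$ — with the lattice points of the $k$-th dilate of the correct half-open region, and in particular arranging $\C'$ so that precisely the cube boundary (equivalently, the coordinate hyperplanes restricted to each $H_b$) and nothing more is removed. The total-unimodularity and compressedness inputs are standard and already available as cited results, so they pose no obstacle; the same argument, \emph{mutatis mutandis}, underlies Theorems~\ref{res:hil:flow-as-hilbert} and~\ref{res:hil:tension-as-hilbert}.
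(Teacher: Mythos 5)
Your proposal is correct and follows essentially the same route as the paper: the paper's argument is exactly the construction of $\C$ and $\C'$ from the slices $[0,1]^E\cap H_b$, the identification $\mFlow_G(k)=\L_{\bigcup\C\setminus\bigcup\C'}(k)$, integrality via total unimodularity of the incidence matrix, compressedness via Theorem~\ref{res:hil:slices-of-cube-compressed}, and an application of Theorem~\ref{thm:relative-ehrhart-hilbert}. You merely supply the bookkeeping details (disjointness of the $H_b$, the boundary characterization of $\bigcup\C'$) that the paper leaves implicit.
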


The above geometric construction can be found in \cite{BreuerSanyal09}, which also contains a similar construction using which an analogue of the above theorem for the modular tension polynomial can be shown \cite{Breuer09}.

\begin{theorem}
\label{res:hil:mtension-as-hilbert}
For any graph $G$ there exists a relative Stanley-Reisner ideal $I_{\Delta/\Delta'}$ such that for all $k\in\ZZ_{>0}$
\[
\mTension_G(k) = \H_{I_{\Delta/\Delta'}}(k).
\]
\vspace{-3ex}
\end{theorem}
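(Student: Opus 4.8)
The plan is to mimic the proof of Theorem~\ref{res:hil:mflow-as-hilbert} verbatim, exchanging the cycle space of $G$ for its cut space. First I would recall from \cite{Breuer09} (or construct directly) the polyhedral model for modular tensions: writing $B$ for the matrix whose rows are the incidence vectors $c$ of a generating set of cycles of $G$ (say, the fundamental cycles with respect to a spanning tree), a function $t:E\rar\ZZ_k$ viewed via the identification $\ZZ_k\leftrightarrow\{0,\ldots,k-1\}$ as an integer vector $t\in[0,k)^E$ satisfies (\ref{eqn:conservation-of-tension}) in $\ZZ_k$ if and only if $Bt=kb$ for some integer vector $b$. Hence the nowhere-zero $\ZZ_k$-tensions of $G$ are in bijection with the lattice points in $k\cdot\big((0,1)^E\cap\bigcup_b H_b\big)$, where $H_b=\{t\mid Bt=b\}$ and $b$ ranges over the integer vectors for which $(0,1)^E\cap H_b\neq\emptyset$.

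Next I would set up the relative polytopal complex: let $\C$ be the polytopal complex generated by the closed polytopes $[0,1]^E\cap H_b$, and let $\C'$ be the subcomplex of all faces contained in the boundary of the cube $[0,1]^E$. Exactly as in the modular flow case, deleting $\bigcup\C'$ removes precisely the lattice points with some coordinate equal to $0$ (equivalently, after scaling by $k$, some coordinate in $k\ZZ$), so $\mTension_G(k)=\L_{\bigcup\C\setminus\bigcup\C'}(k)$ for all $k\in\ZZ_{>0}$. The two hypotheses of Theorem~\ref{thm:relative-ehrhart-hilbert} then need to be checked: the faces of $\C$ must be lattice polytopes, and they must be compressed. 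Both follow once one knows that $B$ is totally unimodular: total unimodularity of $B$ guarantees that $[0,1]^E\cap H_b$ and all its faces have integral vertices, and since each such face is the intersection of the unit cube with an affine subspace, Theorem~\ref{res:hil:slices-of-cube-compressed} of Ohsugi and Hibi shows it is compressed. Applying Theorem~\ref{thm:relative-ehrhart-hilbert} to the pair $\C'\subset\C$ then yields a relative Stanley-Reisner ideal $I_{\Delta/\Delta'}$ with $\H_{I_{\Delta/\Delta'}}(k)=\L_{\bigcup\C\setminus\bigcup\C'}(k)=\mTension_G(k)$ for all $k\in\ZZ_{>0}$, which is the claim.

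The main obstacle is the total unimodularity of the cycle matrix $B$. Unlike the vertex-edge incidence matrix $A$ used for flows, whose total unimodularity is the classical Poincar\'e result cited via \cite{Schrijver86}, the matrix of fundamental cycles with respect to a spanning tree is not literally an incidence matrix; however, it is the standard fact that (for a suitable choice of generating cycles, e.g.\ the fundamental cycles of a spanning tree) $B$ can be taken to be the transpose of a network matrix, and network matrices are totally unimodular --- see \cite{Schrijver86}. I would therefore cite this, or alternatively observe that the cut space and cycle space are orthogonal complements with totally unimodular bases so that one transfers total unimodularity from $A$ to $B$ directly. Once the correct generating set of cycles is fixed so that $B$ is totally unimodular, the rest of the argument is a routine transcription of the modular flow proof, and indeed the geometric construction itself is precisely the one carried out in \cite{Breuer09} (with \cite{BreuerSanyal09} as the source for the analogous flow construction), so no further technical work is required.
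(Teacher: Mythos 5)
Your proposal is correct and is essentially the paper's intended argument: the paper itself only says that a construction ``similar'' to the modular flow one (found in \cite{Breuer09}) yields the tension case, and your transcription --- replacing the incidence matrix by a totally unimodular fundamental cycle matrix $B$, slicing the unit cube by the fibers $H_b$, taking $\C'$ to be the faces in the cube's boundary, and invoking Theorem~\ref{res:hil:slices-of-cube-compressed} and Theorem~\ref{thm:relative-ehrhart-hilbert} --- is exactly that construction, with the total unimodularity of $B$ correctly identified as the one nontrivial input and correctly sourced.
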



\subsection*{Chromatic Polynomial}

Let $G$ be a graph without loops.\footnote{If $G$ contains loops, then $\chi_G(k)=0$ which, trivially, is a Hilbert function.} For each $e\in E$ define $H_e=\{x \: | \: x_{\head(e)}=x_{\tail(e)}\}$ and consider the graphic hyperplane arrangement $\HHH=\{H_e \:|\: e\in E\}$. Then the proper $k$-colorings of $G$ are in bijection with the lattice points in $k\cdot ( [0,1)^E \setminus \bigcup \HHH)$. The closure of any component $C$ of $[0,1)^E \setminus \bigcup \HHH$ is of the form $P_\sigma=\{ x\in[0,1]^V \:|\: \sigma_{e}(x_{\head(e)}-x_{\tail(e)})\geq 0\}$ where $\sigma\in\{\pm1\}^E$ is a sign vector. Let $\C$ be the polytopal complex generated by the $P_\sigma$ and let $\C'$ be the subcomplex consisting of all faces that are contained in some hyperplane $H_e$ or in some hyperplane of the form $\{x\:|\:x_v=1\}$ for some $v\in V$. Then $\chi_G(k)=\L_{\bigcup \C\setminus\bigcup \C'}(k)$. The faces of $\C$ are lattice polytopes and it can be shown that they are compressed. Thus Theorem~\ref{thm:relative-ehrhart-hilbert} can be applied to yield the following result.

\begin{theorem}
\label{res:hil:chromatic-as-hilbert}
For any graph $G$ there exists a relative Stanley-Reisner ideal $I_{\Delta/\Delta'}$ such that for all $k\in\ZZ_{>0}$
\[
\chi_G(k) = \H_{I_{\Delta/\Delta'}}(k).
\]
\vspace{-3ex}
\end{theorem}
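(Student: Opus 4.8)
The plan is to apply Theorem~\ref{thm:relative-ehrhart-hilbert} to the relative polytopal complex $\C'\subset\C$ described immediately before the statement, so the only two things that need verification are: (i) that $\chi_G(k)=\L_{\bigcup\C\setminus\bigcup\C'}(k)$, and (ii) that every face of $\C$ is a compressed lattice polytope. Both of these are asserted in the text but not fully justified, and (ii) is where the real work lies; (i) is essentially a bookkeeping exercise.

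For step (i), I would first argue that the proper $k$-colorings of $G$ are in bijection with the lattice points in $k\cdot([0,1)^E\setminus\bigcup\HHH)$ — more precisely $k\cdot([0,1)^V\setminus\bigcup\HHH)$ — via $x\mapsto x/k$. A map $x:V\to\{0,\dots,k-1\}$ lands in $k[0,1)^V$, and properness $x_{\head(e)}\neq x_{\tail(e)}$ is exactly the condition of avoiding the graphic hyperplanes $H_e$. Next I would check that the closures of the connected components of $[0,1)^V\setminus\bigcup\HHH$ are precisely the polytopes $P_\sigma$ for $\sigma\in\{\pm1\}^E$: on each component, for every edge $e$ the sign of $x_{\head(e)}-x_{\tail(e)}$ is constant, which defines the sign vector $\sigma$; conversely each nonempty $P_\sigma$ is the closure of such a component. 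The subtlety is the half-open cube: a lattice point of $k\cdot[0,1)^V$ that lies on a facet $\{x_v=1\}$ of the closed cube must be excluded, which is why $\C'$ collects the faces lying in some $H_e$ or in some $\{x_v=1\}$. So $\bigcup\C\setminus\bigcup\C'$ is exactly $[0,1)^V\setminus\bigcup\HHH$ as a set, and dilating and counting lattice points gives $\chi_G(k)=\L_{\bigcup\C\setminus\bigcup\C'}(k)$ for $k\in\ZZ_{>0}$.

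For step (ii), the faces of $\C$ are the faces of the polytopes $P_\sigma$. These are cut out of the unit cube $[0,1]^V$ by the hyperplanes $x_{\head(e)}=x_{\tail(e)}$; a face is obtained by additionally setting some coordinates to $0$ or $1$ and possibly turning some of the inequalities $\sigma_e(x_{\head(e)}-x_{\tail(e)})\geq 0$ into equalities. Each such face $P$ therefore has the form $\{x\in[0,1]^V \mid Mx = c\}$ for an integer matrix $M$ whose rows are differences of unit vectors $e_u-e_v$ (plus rows fixing coordinates). I would argue that this is lattice isomorphic to a slice of a lower-dimensional unit cube: contracting each equality $x_u=x_v$ identifies the two vertices, and the result is the intersection of the affine subspace $\{Mx=c\}$ with $[0,1]^{V'}$ for the contracted vertex set $V'$; that the contraction is a lattice isomorphism follows because the constraint matrix of a graph (differences of unit vectors) is totally unimodular, so the lattice points of the face correspond bijectively to lattice points of the slice. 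Then Theorem~\ref{res:hil:slices-of-cube-compressed} of Ohsugi and Hibi applies and $P$ is compressed; total unimodularity also guarantees $P$ is a lattice polytope since its vertices solve totally unimodular systems $Mx=c$ with integral $c$.

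The main obstacle is making the ``slice of a cube'' claim in step (ii) precise: one has to be careful that after imposing the equalities $x_u=x_v$ coming from the graphic hyperplanes, the remaining description of the face genuinely is $L\cap[0,1]^{V'}$ in the contracted coordinates and that the identification respects the integer lattice. This is where one invokes the theory of totally unimodular matrices — the vertex-difference incidence structure of a graph is the standard example — to ensure that solving for one contracted variable in terms of the others does not introduce denominators, so that the affine map realizing the contraction restricts to a bijection $\ZZ^{V}\cap P \to \ZZ^{V'}\cap(L\cap[0,1]^{V'})$. Once this is in hand, Theorems~\ref{res:hil:slices-of-cube-compressed} and \ref{thm:relative-ehrhart-hilbert} finish the proof, with the polynomiality of $\chi_G$ coming for free from Theorem~\ref{thm:relative-ehrhart-hilbert}.
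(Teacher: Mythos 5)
Your strategy is the same as the paper's (which only sketches this construction), and your step (i) is essentially right, including the correction of $[0,1)^E$ to $[0,1)^V$ and the identification of $\bigcup\C\setminus\bigcup\C'$ with $[0,1)^V\setminus\bigcup\HHH$. The gap is in step (ii). A face of $P_\sigma$ is \emph{not} of the form $\{x\in[0,1]^V\mid Mx=c\}$ with $M$ built from the tight constraints only: the slack inequalities $\sigma_e(x_{\head(e)}-x_{\tail(e)})\geq 0$ do not go away when you pass to a face. The simplest counterexample is the top-dimensional cell $P_\sigma$ itself, where nothing is tight, so your description would give the whole cube $[0,1]^V$, whereas $P_\sigma$ is a proper subpolytope (for a single edge it is a triangle inside the unit square). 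So the cells of $\C$ are \emph{not} affine slices of the ambient cube, Theorem~\ref{res:hil:slices-of-cube-compressed} does not apply in the coordinates you use, and no contraction of coordinates can fix this: the obstruction is the presence of extra \emph{inequalities}, which can only be traded away by \emph{adding} coordinates. (Note that in the flow and tension applications the paper cites Theorem~\ref{res:hil:slices-of-cube-compressed} explicitly, but in the chromatic paragraph it deliberately does not --- it only says compressedness ``can be shown.'')

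The standard repair: by Proposition~\ref{res:hil:compressed-polytopes}(3) it suffices to treat the maximal cells $P_\sigma$. On $[0,1]^V$ one automatically has $\sigma_e(x_{\head(e)}-x_{\tail(e)})\le 1$, so on $P_\sigma$ each quantity $y_e:=\sigma_e(x_{\head(e)}-x_{\tail(e)})$ lies in $[0,1]$. The affine map $x\mapsto(x,(y_e)_{e\in E})$ is integral with integral (coordinate-projection) inverse and carries $P_\sigma$ bijectively onto $[0,1]^{V\sqcup E}\cap L$, where $L$ is the graph of the linear map $x\mapsto (\sigma_e(x_{\head(e)}-x_{\tail(e)}))_{e\in E}$; thus $P_\sigma$ is lattice isomorphic to a slice of a \emph{higher-dimensional} cube and Ohsugi--Hibi applies. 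Equivalently one can invoke the stronger form of the Ohsugi--Hibi criterion alluded to in the paper's footnote (every facet direction of $P_\sigma$ has lattice width one on $P_\sigma$), or the classical fact that $P_\sigma$ is an order polytope, whose pulling triangulations are the unimodular triangulations indexed by linear extensions. Your use of total unimodularity to get integrality of the vertices is fine, as is the (implicit, footnoted in the paper) reduction to loopless $G$.
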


This is an improvement upon Steingr\'{\i}msson's Theorem insofar as we obtain the chromatic polynomial $\chi_G(k)$ itself as a Hilbert function of a relative Stanley-Reisner ideal and not the shifted polynomial $\chi_G(k+1)$. To obtain the shifted chromatic polynomial using the above construction we would need to consider the closed cube $[0,1]^V$ instead of the half-open cube $[0,1)^V$. A geometric construction similar to the one given above can be found in \cite{BeckZaslavsky06a}.


\section{Non-Square-Free Ideals}
\label{sec:hil:non-square-free-non-standard}

What if the relative polytopal complex $\C$ does not have a unimodular triangulation? It turns out that if the polytopes in $\C$ are normal lattice polytopes, the Ehrhart function of $\bigcup\C\setminus\bigcup\C'$ is still the Hilbert function of a an ideal $I_2$ in a ring $\KK[x]/I_1$, however we cannot guarantee that the ideals $I_1,I_2$ are square-free. That is, we are dealing with a relative multicomplex instead of a relative simplicial complex.

A lattice polytope $P$ is \defn{normal} if for every $k\in\NN$ every $z\in kP\cap\ZZ^d$ can be written as the sum of $k$ points in $P\cap\ZZ^d$. Note that a compressed polytope is automatically normal. 

With this notion we can generalize Theorem~\ref{thm:relative-ehrhart-hilbert} to include another case where the polytopal complex in question satisfies a weaker condition. The conclusion we obtain in this case is not as strong, however.

\begin{theorem}
\label{res:hil:general-theorem}
Let $\C$ be a polytopal complex in which all faces are normal lattice polytopes. Then for any subcomplex $\C'\subset \C$ there exist a monomial ideal $I_1$ in a polynomial ring $\KK[x]$ equipped with the standard grading and a monomial ideal $I_2$ in $\KK[x]/I_1$ such that for all  $k\in\ZZ_{>0}$
\[
\L_{\bigcup\C\setminus\bigcup\C'}(k) = \H_{I_2}(k)
\]
and this function is a polynomial. If the faces of $\C$ are compressed, then moreover the ideals $I_1$ and $I_2$ can be chosen to be square-free.
\end{theorem}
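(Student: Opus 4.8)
The plan is to rerun the proof of Theorem~\ref{thm:relative-ehrhart-hilbert} with the unimodular local model (an open $i$-simplex contributing $\binom{k-1}{i}$) replaced by a Gr\"obner degeneration of Ehrhart semigroup rings. For each face $P\in\C$ I would introduce variables $x_p$ indexed by $p\in P\cap\ZZ^d$ and let $I_P$ be the toric ideal, i.e.\ the kernel of the $\KK$-algebra map $\KK[x_p:p\in P\cap\ZZ^d]\to\KK[t_1^{\pm1},\dots,t_d^{\pm1},s]$ sending $x_p\mapsto t^ps$. It is generated by binomials $x^a-x^b$ with $\sum_pa_p=\sum_pb_p$ and $\sum_pa_pp=\sum_pb_pp$, so it is homogeneous both for the standard grading and for the finer $\ZZ^{d+1}$-grading $\deg x_p=(p,1)$, and $\KK[x_p:p\in P\cap\ZZ^d]/I_P$ is the Ehrhart semigroup ring of $P$. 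Since $P$ is normal, every lattice point of $kP$ is a sum of $k$ lattice points of $P$, so this ring has degree-$k$ component of dimension $\L_P(k)$, and, being a semigroup ring, each of its $\ZZ^{d+1}$-graded pieces has dimension at most one; hence, for any term order, there is exactly one standard monomial of $I_P$ in each such degree, and the degree-$k$ standard monomials of $I_P$ are in bijection with $kP\cap\ZZ^d$ via $x^u\mapsto\sum_pu_pp$. I would then fix a single term order $\prec$ on $\KK[x]:=\KK[x_p:p\in\bigcup\C\cap\ZZ^d]$ (finitely many variables, since $\bigcup\C$ is bounded), which restricts to a term order on each subring, so that $\initial_\prec(I_P)$ is a monomial ideal with the same Hilbert function as $I_P$. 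The glue is the elementary fact that if $\supp(u)$ is contained in some face of $\C$ and $P_u$ denotes the smallest such face, then $\tfrac1{\deg x^u}\sum_pu_pp\in\relint(P_u)$, since a convex combination with strictly positive weights of points lying in a face $Q$ of a polytope lies in $\relint(Q)$ exactly when $Q$ is the smallest face containing those points; because the relative interiors of the faces of $\C$ partition $\bigcup\C$, the pair consisting of the point $\sum_pu_pp$ and the integer $\deg x^u$ determines $P_u$.

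With these ingredients I would set $I_1:=\langle\mathcal N\rangle$, where $\mathcal N$ is the set of monomials $x^u$ of $\KK[x]$ such that $\supp(u)$ is contained in no face of $\C$, or $x^u\in\initial_\prec(I_{P_u})$, and let $I_2$ be the image in $\KK[x]/I_1$ of the monomial ideal $I_1+\langle x^u:\supp(u)\text{ lies in a face of }\C\text{ and }P_u\in\C\setminus\C'\rangle$. The first thing to verify is that $\mathcal N$ is closed under multiplication by arbitrary monomials; this makes the monomials of $I_1$ exactly $\mathcal N$, so the standard monomials of $I_1$ are precisely the $x^u$ with $\supp(u)$ in a face and $x^u$ standard for $I_{P_u}$. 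Next, since $\C'$ is a subcomplex, any monomial divisible by a generator $x^u$ of $I_2$ has minimal face containing $P_u$, hence outside $\C'$, so the standard monomials of $I_1$ that lie in $I_2$ are exactly those whose minimal face is in $\C\setminus\C'$. Combining these statements with the glue and the normality of the faces, one obtains for every $P\in\C$ and every $k$ a bijection between the degree-$k$ standard monomials of $I_1$ with minimal face $P$ and the set $k\relint(P)\cap\ZZ^d$ (injectivity is transversality of standard monomials; surjectivity realizes a point of $k\relint(P)$ as a sum of $k$ lattice points of $P$). Hence
\[
\H_{\KK[x]/I_1}(k)=\sum_{P\in\C}\L_{\relint(P)}(k)=\L_{\bigcup\C}(k),\qquad\H_{I_2}(k)=\sum_{P\in\C\setminus\C'}\L_{\relint(P)}(k)=\L_{\bigcup\C\setminus\bigcup\C'}(k),
\]
and the last expression is a polynomial, being $\L_{\bigcup\C}-\L_{\bigcup\C'}$. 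If all faces of $\C$ are compressed, the conclusion with square-free $I_1,I_2$ is already Theorem~\ref{thm:relative-ehrhart-hilbert}; it also follows from the construction above, because the toric ideal of a compressed polytope has a square-free initial ideal (for instance for any reverse-lexicographic order, which can be taken on all of $\KK[x]$ at once), so $I_1$ and $I_2$ come out square-free.

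The step I expect to be the main obstacle is precisely the closure of $\mathcal N$ (and of the generating set of $I_2$) under multiplication, which has to be established \emph{without} assuming any compatibility among the $\initial_\prec(I_P)$ on overlapping faces. The key move: if $x^u\in\initial_\prec(I_{P_u})$ is witnessed by a binomial $x^u-x^{u'}\in I_{P_u}$ with $x^{u'}\prec x^u$, and $x^v$ is any monomial with $\supp(u+v)$ contained in a face, set $Q:=P_{u+v}$; then $P_u\subseteq Q$, so $I_{P_u}\subseteq I_Q$ (every binomial generator of $I_{P_u}$ is a binomial relation of $I_Q$) and $\supp(u')\cup\supp(v)\subseteq Q$, and multiplying the witness by $x^v$ and invoking $x^{u'}\prec x^u\Rightarrow x^{u'+v}\prec x^{u+v}$ shows $x^{u+v}\in\initial_\prec(I_Q)\subseteq\mathcal N$. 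The point is to carry out the reduction inside the toric ideal of the minimal face of $\supp(u+v)$, not inside $I_{P_u}$, and the same device handles the multiplicative closure needed for $I_2$.
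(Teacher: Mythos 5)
Your proposal is correct and takes essentially the same route as the paper's proof: a single global term order selecting the $\prec$-minimal monomial in each fiber of the lattice-point map, normality of the faces for surjectivity, and the minimal-face/relative-interior fact to glue the faces consistently. The only difference is organizational — you assemble $I_1$ from the initial ideals of the per-face toric ideals rather than from one global fiber relation after homogenization, and in doing so you spell out the multiplicative-closure step (reducing inside the toric ideal of the minimal face of the product's support) that the paper's proof needs but asserts without argument.
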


The case where the faces of $\C$ are compressed and the ideals are square-free is just Theorem~\ref{thm:relative-ehrhart-hilbert}. We are not going to prove this again. Instead we give a self-contained algebraic proof of the case where the faces are only normal and we do not conclude that the ideals are square-free.

First we define the \defn{polytopal Stanley-Reisner ideal} $I_\C$ of the polytopal complex $\C$ by
\begin{eqnarray*}
I_\C & :=& \ideal{x^a\;|\;\text{there is no $P\in\C$ such that $\supp(a)\subset P$}}
\end{eqnarray*}
where again $\supp(a)$ denotes the set of lattice points $u$ such that $a_u\not=0$.

\begin{proof}
By homogenization, that is by passing to the complex generated by $\{P\times \{1\}| P\in\C\}$, we can  assume without loss of generality that for every lattice point $z$ there is at most one integer $k$ such that $z\in k\bigcup\C$.

We are going to construct ideals $I_2\supset I_1$ in a polynomial ring $\KK[x]$ such that the monomials in $I_2\setminus I_1$ of degree $k$ are in bijection with the lattice points in $k(\bigcup\C\setminus\bigcup\C')$.

Now consider the polynomial ring $\KK[x_{u}:u\in\bigcup\C\cap\ZZ^d]$ equipped with the standard grading. Let $\lto$ be a term order on this polynomial ring. Let $U$ be the matrix that has the vectors $u$ as columns. Let $n$ be the number of columns of $U$ and let $d$ be the number of rows. We define
\begin{eqnarray*}
I_1 &:=& I_\C + \ideal{x^b \; | \; x^b\not\in I_\C\text{ and there is an } x^a\not\in I_\C \text{ such that } Ua=Ub \text{ and } x^a\lto x^b},\\
I_2 &:= & I_{\C'}
\end{eqnarray*}
where $I_\C$ and $I_{\C'}$ denote the polytopal Stanley-Reisner ideals of the complexes $\C$ and $\C'$ respectively. We call a monomial $x^a$ \emph{valid} if $x^a\not\in I_1$ but $x^a\in I_2$. Now we claim that the map $\pi:x^a \mapsto Ua$ defines a bijection between the valid monomials of degree $k$ and the lattice points in $k(\bigcup\C\setminus\bigcup\C')$.

\emph{If $x^a$ is valid and of degree $k$, then $Ua\in k(\bigcup\C\setminus\bigcup\C')\cap \ZZ^d$.} First, we notice that $Ua\in\ZZ^d$, because $a$ and $U$ are integral. Second, we argue that $Ua \in k\bigcup\C$. Because $x^a$ is valid, there exists a polytope $P$ such that $\supp(a)\subset P$ and thus $Ua\subset kP$. Finally, we show that $Ua \not\in k\bigcup \C'$. Suppose $Ua \in kP'$ for an inclusion-minimal $P'\in\C'$. Because $\C'$ is a subcomplex of $\C$, this implies that $P'$ is a face of $P$ and $\supp(a)\subset P'$. Hence $x^a\not\in I_{\C'}$, which is a contradiction to $x^a$ being valid.

\emph{$\pi$ is surjective.} Let $v\in k(\bigcup\C\setminus\bigcup\C')\cap \ZZ^d$ for some $k$. Then there is a polytope $P\in\C\setminus\C'$ such that $v\in\relint(kP)$. By the assumption that $P$ is normal, there exists a non-negative integral representation $b$ of $v$ in terms of lattice points in $P\cap \ZZ^d$: $v = \sum_{u\in P\cap\ZZ^d} b_u u= Ub$. So by construction $x^b\not\in I_\C$ and $Ub=v$. Consider the $\lto$-minimal monomial $x^a\not\in I_\C$ with $Ua=Ub$. For this monomial we have $x^a\not\in I_1$. Moreover, as $Ua=v\in\relint(kP)$ we have $x^a\in I_2$. Finally we have to check that $\deg(x^a)=k$. All elements of $\supp(a)$ are lattice points in $P$, so $Ua\in\deg(x^a)P$. However by our assumption at the beginning there is at most one integer $k'$ such that $v=Ua\in k'P$. Thus $\deg(x^a)=k$.

\emph{$\hat{\pi}$ is injective.} By definition of $I_1$ and as $\lto$ is a total order on the set of monomials, for every $v\in k(\bigcup\C)\cap \ZZ^d$ there is at most one monomial $x^a\not\in I_1$ such that $Ua=v$.
\end{proof}

We remark that this approach can also be used to give another proof of Theorem~\ref{thm:relative-ehrhart-hilbert} using a fundamental correspondence between compressed polytopes and lattice point sets such that the corresponding toric ideal has square-free initial ideals under any reverse-lexicographic term order (see \cite{Sturmfels96}). This approach is explored in \cite{Dall08} and \cite{Breuer09}. \cite{Breuer09} also contains a variant of the above result, due to Breuer and Sanyal, in the case where $\KK[x]$ is equipped with a non-standard grading.


\section{Bounds on the Coefficients}
\label{sec:questions}

${k-1 \choose d}$ is a polynomial of degree $d$ in $k$. The polynomials ${k-1\choose i}$ for $0\leq i \leq d$ form a basis of the $\KK$-vector space of all polynomials in $\KK[k]$ of degree at most $d$ and the polynomials ${k-1\choose i}$ for $0\leq i$ form a basis of $\KK[k]$ when seen as a $\KK$-vector space. By Theorem~\ref{thm:relative-hilbert-function-and-f-vector} the coefficients of the Hilbert function of a relative Stanley-Reisner ideal expressed with respect to this basis must be non-negative and integral. It turns out that this characterizes which polynomials appear as Hilbert functions of relative Stanley-Reisner ideals.

\begin{theorem}
\label{thm:hil:trivial-bounds}
A polynomial $f(k)=\sum_{i=0}^d f_i {k-1\choose i}$ is the Hilbert function of some relative Stanley-Reisner ideal $I_{\Delta/\Delta'}$ if and only if $f_i\in\ZZ_{\geq0}$ for all $0\leq i\leq d$.
\end{theorem}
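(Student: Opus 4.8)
The plan is to prove both directions, with the forward direction being essentially immediate from Theorem~\ref{thm:relative-hilbert-function-and-f-vector} and the reverse direction being the substantive part, handled by an explicit construction. For the forward direction: if $f=\H_{I_{\Delta/\Delta'}}$ for a relative $d$-dimensional abstract simplicial complex $\Delta'\subset\Delta$, then by Theorem~\ref{thm:relative-hilbert-function-and-f-vector} we have $f(k)=\sum_{i=0}^d f_i\binom{k-1}{i}$ where $f_i$ counts the $i$-dimensional simplices in $\Delta\setminus\Delta'$; in particular each $f_i\in\ZZ_{\geq 0}$. Since the $\binom{k-1}{i}$ are linearly independent in $\KK[k]$, the coefficients are uniquely determined, so this is exactly the claimed constraint.

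For the reverse direction, given nonnegative integers $f_0,\ldots,f_d$, I would construct a relative simplicial complex realizing $\sum_i f_i\binom{k-1}{i}$ by taking disjoint unions of simplices of each dimension with their boundaries removed. Concretely: for each $i$ with $0\le i\le d$, take $f_i$ copies of an $i$-simplex; let $\Delta$ be the abstract simplicial complex that is the disjoint union of all these simplices (on disjoint vertex sets, so that the ambient polynomial ring has $\sum_i f_i(i+1)$ variables), and let $\Delta'\subset\Delta$ be the subcomplex consisting of the union of the boundaries of all these simplices. Then $\Delta\setminus\Delta'$ consists of exactly the $f_i$ relatively-open $i$-faces (the top faces of each copy) for each $i$, so the $i$-dimensional face count of $\Delta\setminus\Delta'$ is precisely $f_i$, and Theorem~\ref{thm:relative-hilbert-function-and-f-vector} gives $\H_{I_{\Delta/\Delta'}}(k)=\sum_{i=0}^d f_i\binom{k-1}{i}=f(k)$ for all $k\in\ZZ_{>0}$, as desired. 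One small point to verify is that disjoint union is legitimate: a disjoint union of abstract simplicial complexes on disjoint ground sets is again an abstract simplicial complex, and a union of subcomplexes is a subcomplex, so both $\Delta$ and $\Delta'$ are well-defined; alternatively one can realize this geometrically as a geometric simplicial complex in a sufficiently high-dimensional space, placing the simplices far apart.

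I do not expect a serious obstacle here; the only thing to be careful about is the degenerate case $i=0$ (a $0$-simplex is a single vertex, its boundary is empty, so $f_0$ isolated vertices contribute $f_0\binom{k-1}{0}=f_0$, consistent with $\H(k)$ counting degree-$k$ monomials supported on a single variable), and the convention that $\H_{\KK[\Delta]}(0)=1$ versus the formula at $k=0$; since the statement only asserts the identity for $k\in\ZZ_{>0}$ (matching the hypothesis of Theorem~\ref{thm:relative-hilbert-function-and-f-vector}), this causes no trouble. If desired one could also note that there is freedom in the choice of $d$: padding with $f_i=0$ for larger $i$ changes nothing, so "some relative Stanley-Reisner ideal" is the right level of generality.
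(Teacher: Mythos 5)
Your proposal is correct and matches the paper's proof essentially step for step: the forward direction follows from Theorem~\ref{thm:relative-hilbert-function-and-f-vector}, and the reverse direction uses the same construction of $f_i$ disjoint $i$-simplices with $\Delta'$ the union of their boundaries. The only cosmetic difference is that the paper phrases the final count via the Ehrhart function of the relatively open simplices, whereas you apply the relative $f$-vector formula directly.
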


\begin{proof}
We have already seen that the coefficients of $\H_{I_{\Delta/\Delta'}}(k)$ with respect to the basis ${k-1\choose d}$, $d\in\ZZ_{\geq 0}$ are necessarily non-negative integers. To see that this is also sufficient, let  $f(k)=\sum_{i=0}^d f_i {k-1\choose i}$ with $f_i\in\ZZ_{\geq0}$ for all $0\leq i\leq d$. For $0\leq i\leq d$ and $1\leq j\leq f_i$ let $\sigma^i_j$ denote a closed unimodular lattice simplex of dimension $i$ in $\RR^d$ such that the $\sigma^i_j$ are pairwise disjoint. Let $\Delta$ denote the (disjoint) union of all these $\sigma^i_j$ and define $\Delta'$ to be the union of the respective boundaries $\partial\sigma^i_j$. Then the set $\bigcup\Delta\setminus\bigcup\Delta'$ is the disjoint union of $f_d$ relatively open unimodular lattice simplices of dimension $d$, $f_{d-1}$ relatively open unimodular lattice simplices of dimension $d-1$ and so on. Consequently $\H_{I_{\Delta/\Delta'}}(k)=\L_{\bigcup\Delta\setminus\Delta'}(k)=f(k)$ as desired.
\end{proof}

This immediately implies that all the counting functions we considered have non-negative integral coefficients with respect to this basis.

\begin{theorem}
\label{res:hil:trivial-bounds-on-polynomials}
The $k$-flow and $\ZZ_k$-flow polynomials, the $k$-tension and $\ZZ_k$-tension polynomials and the chromatic polynomial of a graph have non-negative integer coefficients with respect to the basis $\{{k-1\choose d} | 0\leq d\in\ZZ \}$ of $\KK[k]$.
\end{theorem}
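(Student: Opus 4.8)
The plan is to combine Theorem~\ref{thm:hil:trivial-bounds} with the five results established in Section~\ref{sec:hil:applications}, namely Theorems~\ref{res:hil:flow-as-hilbert}, \ref{res:hil:tension-as-hilbert}, \ref{res:hil:mflow-as-hilbert}, \ref{res:hil:mtension-as-hilbert} and \ref{res:hil:chromatic-as-hilbert}. Each of those theorems says that the counting polynomial in question agrees, for all $k\in\ZZ_{>0}$, with the Hilbert function $\H_{I_{\Delta/\Delta'}}$ of some relative Stanley-Reisner ideal. Theorem~\ref{thm:hil:trivial-bounds} (more precisely, the ``only if'' direction, which is just Theorem~\ref{thm:relative-hilbert-function-and-f-vector}) then tells us that any such Hilbert function, written in the basis $\{\binom{k-1}{i}\}$, has non-negative integer coefficients. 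So the skeleton of the argument is: apply the relevant existence theorem to obtain a relative Stanley-Reisner ideal, then invoke Theorem~\ref{thm:hil:trivial-bounds} to read off the coefficient constraint.

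First I would recall that the polynomials $\binom{k-1}{i}$, $0\le i$, form a $\KK$-basis of $\KK[k]$, so each of the five counting polynomials has a unique expansion $\sum_i f_i \binom{k-1}{i}$ and the claim is precisely that all $f_i\in\ZZ_{\ge0}$. Next I would fix one of the counting polynomials, say $\Flow_G$; by Theorem~\ref{res:hil:flow-as-hilbert} there is a relative Stanley-Reisner ideal $I_{\Delta/\Delta'}$ with $\Flow_G(k)=\H_{I_{\Delta/\Delta'}}(k)$ for all $k\in\ZZ_{>0}$. By Theorem~\ref{thm:relative-hilbert-function-and-f-vector}, $\H_{I_{\Delta/\Delta'}}(k)=\sum_{i=0}^{d} g_i\binom{k-1}{i}$ where $g_i\ge0$ is the number of $i$-dimensional simplices in $\Delta\setminus\Delta'$, hence an integer. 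Since two polynomials agreeing on all of $\ZZ_{>0}$ are equal, and the basis expansion is unique, $f_i=g_i\in\ZZ_{\ge0}$. The same verbatim argument, substituting the appropriate theorem from Section~\ref{sec:hil:applications}, handles $\mFlow_G$, $\Tension_G$, $\mTension_G$ and $\chi_G$, completing the proof.

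I do not expect any real obstacle here: the statement is a direct corollary of the work already done, and the only small point of care is the passage from ``agree on $\ZZ_{>0}$'' to ``equal as polynomials'' (so that the coefficient vectors coincide), which is immediate since both sides are polynomials and $\ZZ_{>0}$ is infinite. One could phrase the whole proof in a single sentence per counting polynomial, or even collect all five into one sentence by referring to ``each of Theorems~\ref{res:hil:flow-as-hilbert}--\ref{res:hil:chromatic-as-hilbert}'' at once. The only thing worth flagging for the reader is that this bound is exactly the ``trivial'' one from Theorem~\ref{thm:hil:trivial-bounds}; sharper bounds would require exploiting additional structure (shellability, Cohen-Macaulayness, ear decompositions) of the specific complexes $\Delta$ arising in the constructions, which is the subject of the concluding discussion rather than of this corollary.
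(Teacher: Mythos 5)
Your proposal is correct and matches the paper's argument exactly: the paper derives this theorem as an immediate consequence of Theorem~\ref{thm:hil:trivial-bounds} (whose ``only if'' direction rests on Theorem~\ref{thm:relative-hilbert-function-and-f-vector}) combined with Theorems~\ref{res:hil:flow-as-hilbert} through \ref{res:hil:chromatic-as-hilbert}. Your added remark that agreement on $\ZZ_{>0}$ forces equality of the polynomials, and hence of their coefficient vectors in the basis $\binom{k-1}{i}$, is the only detail the paper leaves implicit.
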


These bounds on the coefficients of the chromatic polynomial are much weaker then the bounds given in \cite{HershSwartz08}, which, because the modular tension polynomial is a divisor of the chromatic polynomial, can also be interpreted as constraints on the modular tension polynomial. We have not been able to find equivalent  or stronger bounds on the coefficients of the modular flow and the integral flow and tension polynomials than those stated in Theorem~\ref{res:hil:trivial-bounds-on-polynomials} in the prior literature. The task of exploiting our results to obtain stronger constraints on these three classes of polynomials is a question for future research. With the integral flow and tension polynomials it seems difficult to obtain strong bounds, as both the complexes $\C$ and $\C'$ have a complex structure in these cases. This is contrary to the case of the chromatic polynomial, where $\C$ is a subdivision of the cube and thus its Ehrhart function can be given explicitly. In the case of the modular flow polynomial, however, there is another realization of $\mFlow_G$ as an Ehrhart function, which can be found in \cite{BreuerSanyal09} and \cite{Breuer09}, such that $\C$ is again a subdivision of the cube. Thus constraints on the Ehrhart polynomial of $\C'$ translate directly into constraints on $\mFlow_G$. We conjecture that in this case, the complex $\C'$ has a convex ear decomposition.

\paragraph{Acknowledgements}
\label{sec:ack}
We thank Matthias Beck and Christian Haase for suggesting this topic to us and for pointing us to the notion of compressed polytopes.

\bibliographystyle{alpha}
\bibliography{article}
\label{sec:biblio}

\end{document}